\documentclass[12pt]{article}

\usepackage{amsfonts,amsmath}

\usepackage{epsfig}
\usepackage{enumerate}
\usepackage{color}

\graphicspath{{figures/}}
\usepackage{booktabs}

\newtheorem{theorem}{Theorem}[section]
\newtheorem{lemma}[theorem]{Lemma}

\newtheorem{corollary}[theorem]{Corollary}

\newtheorem{problem}[theorem]{Problem}

\numberwithin{equation}{section}

\newlength{\sperr}
\def\BBox{\hbox{\vrule height 6pt depth 0pt width 6pt}}
\newenvironment{proof}{{\settowidth{\sperr}{\rm Proof}
\parbox[t]{1.3\sperr}{\bf Proof.}
}~}{\nopagebreak\mbox{}\hfill$\BBox$\par\addvspace{4mm}}

\newcommand{\bzero}{\mbox{\boldmath{$0$}}}

\newcommand{\fb}{\mbox{\boldmath{$f$}}}

\newcommand{\bvarepsilon}{\mbox{\boldmath{$\varepsilon$}}}

\newcommand{\bu}{\mbox{\boldmath{$u$}}}

\newcommand{\bI}{\mbox{\boldmath{$I$}}}

\newcommand{\blambda}{\mbox{\boldmath{$\lambda$}}}

\newcommand{\bv}{\mbox{\boldmath{$v$}}}
\newcommand{\bV}{\mbox{\boldmath{$V$}}}

\newcommand{\bz}{\mbox{\boldmath{$z$}}}

\newcommand{\bsigma}{\mbox{\boldmath{$\sigma$}}}

\newcommand{\bnu}{\mbox{\boldmath{$\nu$}}}

\newcommand{\real}{\mbox{$\mathbb{R}$}}

\begin{document}

\begin{center}
\LARGE\bf Well-posedness and Numerical Analysis of\\
Mixed Variational-hemivariational Inequalities
\end{center}

\begin{center}
\large
Weimin Han,\footnote{Department of Mathematics, University of Iowa, Iowa City, IA 52242-1410, USA;
email: weimin-han@uiowa.edu}\quad
Jianguo Huang\footnote{School of Mathematical Sciences, and MOE-LSC, Shanghai Jiao Tong University, Shanghai 200240, China; email: jghuang@sjtu.edu.cn.  The work of this author was partially supported by NSFC (Grant no.\ 12571390).}
\quad and\quad
Yuan Yao\footnote{Program in Applied Mathematical and Computational Sciences, University of Iowa, Iowa City, IA 52242-1410, USA; email: yuan-yao@uiowa.edu}
\end{center}

\smallskip

\medskip
\begin{quote}
{\bf Abstract.} 
The paper is devoted to well-posedness analysis and the numerical solution of a family of general elliptic mixed variational-hemivariational
inequalities.  Various mixed variational equations, mixed variational inequalities and mixed 
hemivariational inequalities found in the literature are special cases of the mixed 
variational-hemivariational inequalities. Well-posedness of the mixed variational-hemivariational inequalities and their numerical 
approximations are studied via the projection iteration technique.  Error analysis of the numerical
methods is presented.  The results are applied to the study of a variational-hemivariational inequality
of the Stokes equations for incompressible fluid flows subject to slip conditions of frictional type, 
both monotone and non-monotone.  Optimal order error estimates are derived for the use of some stable 
finite element space pairs under certain solution regularity assumptions.  Numerical results are 
reported demonstrating the theoretical prediction of convergence orders.
\end{quote}

{\bf Keywords.} Mixed variational-hemivariational inequality, well-posedness, 
projection iteration, numerical method, error estimation, Stokes equations

{\bf AMS subject classification}: 65N30, 35J50, 49J40, 74M10, 74M15.

\medskip

\section{Introduction}\label{sec:intro}

Mixed formulations arise frequently for application problems involving constraints.
A well-known example of mixed formulations occurs in the treatment of the incompressibility constraint for fluid flow problems.
Moreover, mixed formulations are useful in developing efficient numerical methods for the computation of
physical quantities other than the original unknown variable of the underlying partial differential equations.
To solve mixed problems from applications, numerical methods are needed.  There is a large number of
publications on the numerical solution of mixed problems.
The book \cite{BBF2013} is a standard reference on mixed finite element methods for solving a variety of boundary value problems through
their mixed formulations.  The majority of publications on mixed numerical methods are devoted to solving mixed variational equations.  Nevertheless, some references are available on mixed finite element methods for solving mixed variational inequalities, e.g., \cite{BHR78, HHNL1988, SBL04}.  

While mixed variational inequalities describe mathematical models of non-smooth phenomena in which the 
non-smooth functional is assumed to be convex, mixed hemivariational inequalities are mathematical models
for non-smooth problems in which the non-smooth functional can be non-convex.  A number of papers can be 
found on well-posedness analysis of mixed hemivariational inequalities, or that of more general mixed 
variational-hemivariational inequalities.  In \cite{BMZ19, BMZ20, Mat19, MBZ19}, solution existence results
for mixed variational-hemivariational inequalities are proved by applying abstract fixed-point principles for set-valued mappings.  In contrast, in \cite{HM22a}, for a particular mixed 
variational-hemivariational inequality of rank $(1,1)$ (following the nomenclature in 
\cite[Section 5.1, Chapter 7]{Han2024}, cf.\ Section \ref{sec:well}), a minimax principle is 
established and the inequality problem is studied through an equivalent saddle-point formulation. 
In \cite{HM22b}, a combination of the results established in \cite{HM22a} and the Banach fixed-point argument leads to well-posedness results of 
general mixed variational-hemivariational inequalities of rank $(1,1)$ and rank $(2,1)$.  
In \cite{TZ25}, a projection iteration technique is developed in the study of a mixed hemivariational
inequality, which can be termed a mixed variational-hemivariational inequality of rank $(0,1)$ 
following the nomenclature in \cite{Han2024}.  In this paper, we extend the projection iteration technique 
for the well-posedness analysis of general mixed variational-hemivariational inequalities of rank 
$(1,1)$ and rank $(2,1)$.  Compared to the results in \cite{HM22a, HM22b}, uniqueness of both unknown 
variables (uniqueness of both velocity and pressure in applications of incompressible fluid flows) 
can be established in the abstract level, at the expense of a finer setting of the
mixed variational-hemivariational inequalities.  We then proceed to perform numerical analysis of the general mixed variational-hemivariational inequalities.  

Hemivariational inequalities are useful for applications involving non-smooth, non-monotone and set-valued relations among
physical quantities.  Since the pioneering work of Panagiotopoulos four decades ago (\cite{Pa83}), modeling, analysis, numerical
solution and applications of hemivariational inequalities have attracted more and more attention from the research community. 
One has witnessed a substantial increase in the number of publications related to hemivariational inequalities in recent years. 
As representative recent references for well-posedness analysis results of variational-hemivariational 
inequalities, one is referred to \cite{SM2025} based on applications of abstract surjectivity results on pseudomonotone 
operators and to \cite{Han2024} for an accessible approach based on basic knowledge from Functional Analysis.  One is also referred 
to \cite{HS19AN, HFWH25} for recent surveys on numerical analysis of variational-hemivariational inequalities.

The salient features of the paper are that (1) the mixed variational-hemivariational inequalities 
studied are rather general, and they include a variety of mixed formulations found in the literature as
special cases (cf.\ the comments right after the statement of Problem \ref{p3} in Section \ref{sec:well}); 
(2) analysis of well-posedness of the mixed variational-hemivariational inequalities and their numerical 
approximations is carried out by an easily accessible approach, without the need of abstract theories 
of pseudomonotone operators used in many references on hemivariational inequalities or
variational-hemivariational inequalities.  

In the study of hemivariational inequalities, we need the notions of the generalized directional derivative and 
generalized subdifferential in the sense of Clarke (\cite{Cl1983}).  
Let $\Psi\colon V\to \real$ be a locally Lipschitz continuous functional defined on a real Banach space $V$.  
Then its generalized (Clarke) directional derivative at $u\in V$ in the direction $v \in V$ is defined by
\[ \Psi^0(u; v) := \limsup_{w\to u,\,\lambda\downarrow 0}\frac{\Psi(w+\lambda v) -\Psi(w)}{\lambda}, \]
and the generalized subdifferential of $\Psi$ at $u\in V$ is defined by
\[ \partial\Psi(u) := \left\{\eta\in V^* \mid \Psi^{0}(u;v)\ge\langle\eta,v\rangle\ \forall\,v \in V\right\}.  \]

In the special case where $\Psi \colon V \to \real$ is locally Lipschitz continuous and convex, the
subdifferential $\partial \Psi(u)$ at any $u \in V$ in the sense of Clarke coincides with the convex subdifferential
$\partial \Psi(u)$.  Hence, the notion of the Clarke subdifferential can be viewed as a generalization of that
of the convex subdifferential.  For all $\lambda \in \real$ and all $u\in V$, we have
\[ \partial(\lambda\,\Psi)(u)=\lambda\,\partial \Psi(u).  \]
Moreover, for locally Lipschitz functions $\Psi_1,\Psi_2 \colon V \to \real$, the inclusion
\begin{equation}\label{prop1}
\partial (\Psi_1 + \Psi_2) (u) \subset \partial \Psi_1 (u) + \partial \Psi_2 (u)\quad\forall\,u\in V
\end{equation}
holds, which is equivalent to the inequality
\begin{equation}\label{prop2}
(\Psi_1 + \Psi_2)^0(u; v) \le \Psi_1^0(u; v) + \Psi_2^0(u; v)\quad\forall\,u,v\in V.
\end{equation}

Detailed discussions of the generalized directional derivative and the generalized subdifferential for locally Lipschitz
continuous functionals, including their properties, can be found in several references, e.g.\ \cite{Cl1983, MOS2013}.

The rest of the paper is organized as follows.   In Section \ref{sec:well}, we address the well-posedness of mixed variational-hemivariational 
inequalities, first for those of rank $(2,1)$, and then for those of rank $(1,1)$.  In Section \ref{sec:na}, 
we provide numerical analysis of the mixed variational-hemivariational inequalities.
In Section \ref{sec:Stokes}, we apply the results presented in Sections \ref{sec:well} and \ref{sec:na}
in the study of a variational-hemivariational inequality for the Stokes equations subject to 
non-leak slip boundary conditions of friction type.  In Section \ref{sec:ex}, we report computer simulation results on some numerical examples.

\section{Well-posedness of mixed variational-hemivariational inequalities}\label{sec:well}

From now on, we let $V$ and $Q$ be two real Hilbert spaces.  Their dual spaces are denoted by
$V^*$ and $Q^*$.  The symbol $\langle\cdot,\cdot\rangle$ denotes the duality pairing between $V^*$ and $V$,
or between $Q^*$ and $Q$;  it should be clear from the context which duality pairing is meant by
$\langle\cdot,\cdot\rangle$.  Let $K_V\subset V$ and $K_Q\subset Q$.
Let $A\colon V\to V^*$, $b\colon V\times Q\to \real$, $\Phi\colon V\times V\to\real$, 
$\Psi\colon V\to \real$, and $f\in V^*$ be given operators and functionals.  We consider a general 
mixed variational-hemivariational inequality.

\begin{problem}\label{p3}
Find $(u,p)\in K_V \times K_Q$ such that
\begin{align}
&\langle Au,v-u\rangle +b(v-u,p)+\Phi(u,v)-\Phi(u,u) +\Psi^0(u;v-u)\ge\langle f,v-u\rangle
\quad\forall\,v\in K_V,\label{n1}\\
&b(u,q-p)\leq 0\quad\forall\, q\in K_Q.\label{n2}
\end{align}
\end{problem}

We comment that the form of Problem \ref{p3} is rather general, and it includes a variety of mixed
problems found in the literature as special cases.  E.g., the mixed problem studied in \cite{SBL04}
corresponds to the choice $\Phi\equiv 0$, $\Psi\equiv 0$, and $A$ linear (for the mixed problem 
in \cite{SBL04}, the right hand side of \eqref{n2} is replaced by a continuous linear functional, which 
does not cause any complication in the analysis and numerical solution of the problem).  For the mixed 
problem studied in \cite{HKSS17} and the Stokes variational inequality studied in \cite{DK16}, $A$ is 
linear, $\Psi\equiv 0$, $K_V=V$ and $K_Q=Q$, the latter having the implication that \eqref{n2} is reduced  
to a simpler equation. The problem studied in \cite{KHSJ18} is simpler than that in \cite{HKSS17} in the
sense that $\Phi$ depends on only one variable. For the problem studied in \cite{FCHCD20} and 
\cite{LHZ22}, $A$ is linear, $\Phi\equiv 0$, $K_V=V$ and $K_Q=Q$. In the very special case where 
$\Phi\equiv 0$, $\Psi\equiv 0$, $K_V=V$, $K_Q=Q$, and $A$ linear, Problem \ref{p3} reduces to the 
commonly seen mixed variational equations extensively studied in the literature (cf.\ \cite{BBF2013}).

In the study of Problem \ref{p3}, we will use the following conditions on the problem data.

\begin{itemize}
\item $H(K_V)$ \ $V$ is a real Hilbert space, $K_V\subset V$ is non-empty, closed and convex, and it contains a subspace $V_0$.

\item $H(K_Q)$ \ $Q$ is a real Hilbert space, $K_Q\subset Q$ is non-empty, closed and convex.

\item  $H(A)$ \ $A\colon V\to V^*$ is Lipschitz continuous and strongly monotone.

\item  $H(b)$ \ $b:V\times Q\to \mathbb R$ is bilinear and bounded.  Moreover, the inf-sup condition holds:
\begin{equation}
\alpha_b \|q\|_Q\le\sup_{w\in V_0} \frac{b(w,q)}{\|w\|_V}\quad\forall\,q\in Q. 
\label{infsup}
\end{equation}

\item $H(\Phi)_2$ \ $\Phi\colon V\times V\to\mathbb{R}$; for any $u\in V$, $\Phi(u,\cdot)\colon V\to\mathbb{R}$
is convex and bounded above on a non-empty open set; and there exists a constant $\alpha_\Phi\ge 0$ such that
\begin{equation}
\Phi(u_1,v_2)-\Phi(u_1,v_1)+\Phi(u_2,v_1)-\Phi(u_2,v_2)\le \alpha_\Phi \|u_1-u_2\|\,\|v_1-v_2\|
\quad\forall\,u_1,u_2,v_1,v_2\in V.
\label{n3}
\end{equation}
Moreover, 
\begin{equation}
\Phi(u,v+w)=\Phi(u,v)\quad\forall\,u,v\in K_V,\,w\in V_0
\label{Phi2}
\end{equation}

\item  $H(\Psi)$ \ $\Psi\colon V\to\mathbb{R}$ is locally Lipschitz continuous, and there exists a constant
$\alpha_\Psi\geq 0$ such that
\begin{align}
\Psi^0(v_1;v_2-v_1)+\Psi^0(v_2;v_1-v_2)\le \alpha_\Psi\|v_1-v_2\|_V^2\quad\forall\,v_1,v_2\in V.\label{eq4}
\end{align}
Moreover,
\begin{equation}
\Psi^0(v;w)=0\quad\forall\,v\in K_V,\,w\in V_0.
\label{Psi}
\end{equation}

\item  $H(f)$ \ $f\in V^*$.

\end{itemize}

Following the nomenclature used in \cite{Han2024}, Problem \ref{p3} is a mixed variational-hemivariational 
inequality of rank $(2,1)$ since the function $\Phi$ depends on two variables and the function $\Psi$ 
depends on one variable. When $\Psi\equiv 0$, Problem \ref{p3} is reduced to a mixed variational inequality.
When $\Phi\equiv 0$, Problem \ref{p3} can be viewed as a ``pure'' mixed hemivariational inequality.

We comment that under the assumption $H(K_V)$, $v+w\in K_V$ for any $v\in K_V$ and any $w\in V_0$
(\cite[Lemma 2.1]{SBL04}).

Related to the condition $H(A)$, we will use $M_A>0$ for the Lipschitz constant:
\begin{equation}
 \|Av_1 -Av_2\|_{V^*} \le M_A \|v_1-v_2\|_V\quad\forall\,v_1,v_2\in V,
\label{Lip:A}
\end{equation}
and use $m_A>0$ for the strong monotonicity constant:
\begin{equation}
 \langle Av_1 -Av_2, v_1-v_2\rangle \ge m_A \|v_1-v_2\|_V^2\quad\forall\,v_1,v_2\in V.
\label{Mon:A}
\end{equation}
Related to the condition $H(b)$,  we will use $M_b>0$ for the boundedness constant:
\begin{equation}
 \left|b(v,q)\right| \le M_b \|v\|_V \|q\|_Q \quad\forall\,v\in V,\,q\in Q.
\label{bd:b}
\end{equation}
Assumption $H(b)$ allows us to define an operator $B\in {\cal L}(V;Q)$ by  the relation
\[ (Bv,q)_Q = b(v,q)\quad\forall\,v\in V,\,q\in Q.\]
The inequality \eqref{bd:b} is equivalent to 
\[ \|B\|\le M_b. \]

Note that the assumption $\Phi(u,\cdot)\colon V\to\mathbb{R}$ being convex and bounded above on a non-empty open set
implies that $\Phi(u,\cdot)$ is continuous on $V$ (cf.\ \cite{ET}).
The convex function $\Phi(u,\cdot)\colon V\to\mathbb{R}$ is assumed to be continuous, instead of l.s.c.  
As is explained in \cite{Han20, Han21} or \cite[Section 5.1]{Han2024}, there is no loss of generality with 
the stronger assumption of continuity for a vast majority of applications. The subscript 2 in $H(\Phi)_2$ reminds the reader
that this is a condition for the case where $\Phi$ depends on two variables.  The constant $\alpha_\Psi\ge 0$ 
in \eqref{eq4} measures the strength of non-convexity of $\Psi$: the smaller the value of $\alpha_\Psi$, 
the weaker the non-convexity of $\Psi$; in particular, in the degenerate case where $\Psi$ is convex, \eqref{eq4} holds with $\alpha_\Psi=0$.

The first step in the analysis of Problem \ref{p3} is to convert the inequality \eqref{n2} in the form of
an equality. Let $P_{K_Q}\colon Q\to K_Q$ be the orthogonal projection operator from $Q$ to $K_Q$.  

\begin{lemma}\label{lem:b}
Under the assumption $H(b)$, \eqref{n2} is equivalent to 
\begin{equation}
p=P_{K_Q}(p+\rho\,Bu) \quad\forall\,\rho>0.
\label{n2p}
\end{equation}
\end{lemma}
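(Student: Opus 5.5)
The plan is to use the variational characterization of the orthogonal projection onto a non-empty, closed, convex subset of a Hilbert space. Since $H(K_Q)$ is implicitly in force (the statement concerns $p\in K_Q$ and $P_{K_Q}$), for any $z\in Q$ the element $P_{K_Q}z\in K_Q$ is characterized by
\[ P_{K_Q}z = r \iff r\in K_Q \ \text{and}\ (z-r,q-r)_Q\le 0\quad\forall\,q\in K_Q. \]
Both directions of the lemma will follow by applying this characterization with $z=p+\rho\,Bu$ and $r=p$, and rewriting $b(u,\cdot)$ through the operator $B$ via $(Bu,q)_Q=b(u,q)$.

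\emph{Direction \eqref{n2} $\Rightarrow$ \eqref{n2p}.} Fix $\rho>0$. Since $p\in K_Q$, I only need to check that $(p+\rho Bu-p,\,q-p)_Q\le 0$ for all $q\in K_Q$. This quantity equals $\rho\,(Bu,q-p)_Q=\rho\,b(u,q-p)$, which is $\le 0$ by \eqref{n2} and $\rho>0$. Hence $p=P_{K_Q}(p+\rho Bu)$.

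\emph{Direction \eqref{n2p} $\Rightarrow$ \eqref{n2}.} Given \eqref{n2p}, even for a single $\rho>0$, the characterization gives $p\in K_Q$ and $\rho\,(Bu,q-p)_Q\le 0$ for all $q\in K_Q$. Dividing by $\rho>0$ yields $b(u,q-p)\le 0$ for all $q\in K_Q$, which is \eqref{n2}.

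There is no real obstacle. The only points needing care are that the projection characterization requires $K_Q$ to be non-empty, closed and convex, and that we use the boundedness in $H(b)$ solely to define $B$ via the Riesz representation. I would also remark that \eqref{n2p} holding for one $\rho>0$ is already equivalent to it holding for all $\rho>0$, which is the form used in the projection iteration that follows.
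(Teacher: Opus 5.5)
Your proof is correct and follows the paper's own argument: rewrite \eqref{n2} via $B$ as $(Bu,q-p)_Q\le 0$, multiply by $\rho>0$, and recognize the variational characterization of $P_{K_Q}$ evaluated at $p+\rho\,Bu$. Your version is slightly more careful than the paper's. You state explicitly that $H(K_Q)$ is needed for the projection characterization, and you test with $q\in K_Q$, whereas the paper's proof writes $q\in Q$, which is a slip.
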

\begin{proof}
By making use of the operator $B$, we can rewrite \eqref{n2} as
\[ (Bu,q-p)_Q\le 0\quad\forall\,q\in Q.\]
For any $\rho>0$, this is equivalent to
\[ ((p+\rho\,Bu)-p,q-p)_Q\le 0\quad\forall\,q\in Q,\]
i.e., \eqref{n2p} holds. 
\end{proof}

\begin{theorem}\label{thm:main2}
Assume $H(K_V)$, $H(K_Q)$, $H(A)$, $H(b)$, $H(\Phi)_2$, $H(\Psi)$, $H(f)$, and 
\begin{equation}
\alpha_\Phi+\alpha_\Psi<m_A.
\label{small2}
\end{equation}
Then, Problem \ref{p3} has a unique solution $(u,p)\in K_V\times K_Q$.
\end{theorem}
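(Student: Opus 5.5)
Our plan is a projection iteration on the pressure: for a fixed $p\in K_Q$ we solve a variational-hemivariational inequality for $u$, update $p$ via Lemma \ref{lem:b}, and show the update map is a contraction. For fixed $q\in K_Q$, we consider the auxiliary problem: find $u_q\in K_V$ such that
\[ \langle Au_q,v-u_q\rangle+b(v-u_q,q)+\Phi(u_q,v)-\Phi(u_q,u_q)+\Psi^0(u_q;v-u_q)\ge\langle f,v-u_q\rangle\quad\forall\,v\in K_V. \]
This is a variational-hemivariational inequality of rank $(2,1)$ with right-hand side $f-B^*q\in V^*$. By the standard well-posedness theory (e.g., \cite{Han2024}), under $H(A)$, $H(\Phi)_2$, $H(\Psi)$ and \eqref{small2}, it has a unique solution $u_q$.

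Next, we prove Lipschitz dependence on $q$. We take $v=u_{q_2}$ in the inequality for $u_{q_1}$ and $v=u_{q_1}$ in the one for $u_{q_2}$, add, and use \eqref{Mon:A}, \eqref{n3} and \eqref{eq4}. This yields
\[ (m_A-\alpha_\Phi-\alpha_\Psi)\|u_{q_1}-u_{q_2}\|_V^2\le b(u_{q_2}-u_{q_1},q_1-q_2), \]
hence $\|u_{q_1}-u_{q_2}\|_V\le c_0^{-1}M_b\|q_1-q_2\|_Q$ with $c_0:=m_A-\alpha_\Phi-\alpha_\Psi>0$. This estimate alone is not enough for a contraction. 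We therefore also need an identity on $V_0$. Since $K_V\pm V_0\subset K_V$, we take $v=u_q\pm w$ with $w\in V_0$. Using \eqref{Phi2} and \eqref{Psi}, the $\Phi$ and $\Psi$ terms vanish, giving
\[ \langle Au_q,w\rangle+b(w,q)=\langle f,w\rangle\quad\forall\,w\in V_0. \]
Subtracting these identities for $q_1,q_2$ and using \eqref{infsup} gives $\alpha_b\|q_1-q_2\|_Q\le M_A\|u_{q_1}-u_{q_2}\|_V$.

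We then define $T\colon K_Q\to K_Q$ by $Tq=P_{K_Q}(q+\rho Bu_q)$. By Lemma \ref{lem:b}, a fixed point $p$ of $T$, together with $u=u_p$, is exactly a solution of Problem \ref{p3}; conversely, any solution gives a fixed point. The main obstacle is showing that $T$ is a contraction for small $\rho>0$. Since the projection is nonexpansive, with $\delta q=q_1-q_2$ and $\delta u=u_{q_1}-u_{q_2}$ we have
\[ \|Tq_1-Tq_2\|_Q^2\le\|\delta q\|_Q^2+2\rho\,(B\delta u,\delta q)_Q+\rho^2\|B\delta u\|_Q^2. \]
The first estimate gives $(B\delta u,\delta q)_Q=b(\delta u,\delta q)\le -c_0\|\delta u\|_V^2$. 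The second gives $\|\delta u\|_V\ge (\alpha_b/M_A)\|\delta q\|_Q$. Combining these with $\|B\delta u\|_Q\le M_b\|\delta u\|_V$, we obtain
\[ \|Tq_1-Tq_2\|_Q^2\le\|\delta q\|_Q^2-(2\rho c_0-\rho^2M_b^2)\|\delta u\|_V^2\le\Bigl(1-(2\rho c_0-\rho^2M_b^2)\alpha_b^2/M_A^2\Bigr)\|\delta q\|_Q^2. \]
For $0<\rho<2c_0/M_b^2$ the factor is less than $1$, and it is nonnegative, so $T$ is a contraction.

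Finally, the Banach fixed-point theorem on the closed set $K_Q$ gives a unique $p$, and $u=u_p$ completes the solution. For uniqueness, let $(u',p')$ be any solution. Then $u'=u_{p'}$ by uniqueness of the auxiliary problem, and $p'$ is a fixed point of $T$ by Lemma \ref{lem:b}, so $p'=p$ and $u'=u$.
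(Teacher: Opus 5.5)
Your proof is correct and follows essentially the same projection-iteration argument as the paper. The key estimates are identical: the monotonicity bound $b(\delta u,\delta q)\le -(m_A-\alpha_\Phi-\alpha_\Psi)\|\delta u\|_V^2$, the $V_0$-identity plus inf-sup giving $\|\delta u\|_V\ge(\alpha_b/M_A)\|\delta q\|_Q$, and nonexpansiveness of $P_{K_Q}$ yielding the factor $1-\theta\alpha_b^2/M_A^2$. The difference is only in packaging: you phrase the argument as a contraction $T$ on $K_Q$ and apply Banach's theorem, which avoids the paper's limit passage in \eqref{n1b} (where the paper needs upper semicontinuity of $\Psi^0$ and continuity of $\Phi$), and you get uniqueness of $(u,p)$ directly from uniqueness of the fixed point and of $u_q$.
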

\begin{proof}
Let $p_0\in V_Q$ be arbitrary but fixed.  The parameter $\rho>0$ will be chosen sufficiently small later in the proof.  Define a sequence $\{(u_n,p_n)\}_{n\ge 0}\subset K_V\times K_Q$ by 
\begin{align}
&\langle Au_n,v-u_n\rangle +\Phi(u_n,v)-\Phi(u_n,u_n) +\Psi^0(u_n;v-u_n)\nonumber\\
&\qquad \ge\langle f,v-u_n\rangle-b(v-u_n,p_n)\quad\forall\,v\in K_V,\label{n1b}\\
&p_{n+1}=P_{K_Q}(p_n+\rho\,Bu_n). \label{n2b}
\end{align}
Under the stated assumptions, given $p_n\in K_Q$, the variational-hemivariational inequality \eqref{n1b} has a unique solution $u_n\in K_V$ (cf.\ e.g., \cite[Theorem 5.9]{Han2024}).  We take $v=u_{n-1}$ in \eqref{n1b}
and take $v=u_n$ in \eqref{n1b} with $n$ replaced by $(n-1)$:
\begin{align*}
&\langle Au_n,u_{n-1}-u_n\rangle +\Phi(u_n,u_{n-1})-\Phi(u_n,u_n) +\Psi^0(u_n;u_{n-1}-u_n)\\
&\qquad \ge\langle f,u_{n-1}-u_n\rangle-b(u_{n-1}-u_n,p_n)\\
&\langle Au_{n-1},u_n-u_{n-1}\rangle+\Phi(u_{n-1},u_n)-\Phi(u_{n-1},u_{n-1})+\Psi^0(u_{n-1};u_n-u_{n-1})\\
&\qquad \ge\langle f,u_n-u_{n-1}\rangle-b(u_n-u_{n-1},p_{n-1}).
\end{align*}
Add the two inequalities to get
\begin{align*}
\langle Au_n-Au_{n-1},u_n-u_{n-1}\rangle
& \le \Phi(u_n,u_{n-1})-\Phi(u_n,u_n)+\Phi(u_{n-1},u_n)-\Phi(u_{n-1},u_{n-1})\\
&\quad{} +\Psi^0(u_n;u_{n-1}-u_n)+\Psi^0(u_{n-1};u_n-u_{n-1})\\
&\quad{} -b(u_n-u_{n-1},p_n-p_{n-1}).
\end{align*}
Applying $H(A)$, $H(\Phi)_2$ and $H(\Psi)$,
\[ m_A \|u_n-u_{n-1}\|_V^2 \le \alpha_\Phi \|u_n-u_{n-1}\|_V^2 + \alpha_\Psi \|u_n-u_{n-1}\|_V^2
-b(u_n-u_{n-1},p_n-p_{n-1}).\]
Hence,
\begin{equation}
\left(m_A-\alpha_\Phi-\alpha_\Psi\right) \|u_n-u_{n-1}\|_V^2 \le -b(u_n-u_{n-1},p_n-p_{n-1}).
\label{n20}
\end{equation}
We derive from \eqref{n20} that
\[ \left(m_A-\alpha_\Phi-\alpha_\Psi\right) \|u_n-u_{n-1}\|_V^2 \le M_b \|u_n-u_{n-1}\|_V \|p_n-p_{n-1}\|_Q, \]
i.e.,
\begin{equation}
\|u_n-u_{n-1}\|_V\le c_0\|p_n-p_{n-1}\|_Q,\quad c_0:=\frac{M_b}{m_A-\alpha_\Phi-\alpha_\Psi}.
\label{n21}
\end{equation}

Next, from the definition \eqref{n2b} for $p_{n+1}$ and $p_n$, and the non-expansiveness of $P_{K_Q}$, we have
\begin{align*}
\|p_{n+1}-p_n\|_Q^2 & \le \|(p_n+\rho\,Bu_n) - (p_{n-1}+\rho\,Bu_{n-1})\|_Q^2\\
& = \|p_n-p_{n-1}\|_Q^2 + \rho^2 \|B(u_n-u_{n-1})\|_Q^2 +2\,\rho\,b(u_n-u_{n-1},p_n-p_{n-1}). 
\end{align*}
From \eqref{n20},
\[ b(u_n-u_{n-1},p_n-p_{n-1}) \le -\left(m_A-\alpha_\Phi-\alpha_\Psi\right) \|u_n-u_{n-1}\|_V^2. \]
Also, 
\[ \|B(u_n-u_{n-1})\|_Q \le M_b \|u_n-u_{n-1}\|_V. \]
Thus,
\begin{align*}
\|p_{n+1}-p_n\|_Q^2 & \le \|p_n-p_{n-1}\|_Q^2 + \rho^2 M_b^2 \|u_n-u_{n-1}\|_V^2
-2\,\rho\,\left(m_A-\alpha_\Phi-\alpha_\Psi\right) \|u_n-u_{n-1}\|_V^2\\
& = \|p_n-p_{n-1}\|_Q^2 - \theta\,\|u_n-u_{n-1}\|_V^2,
\end{align*}
where
\[ \theta = 2\,\rho\,\left(m_A-\alpha_\Phi-\alpha_\Psi\right) - \rho^2 M_b^2. \]
Choose $\rho>0$ sufficiently small so that $\theta>0$.  Then, 
\begin{equation}
\|p_{n+1}-p_n\|_Q^2\le \|p_n-p_{n-1}\|_Q^2 - \theta\,\|u_n-u_{n-1}\|_V^2.
\label{n22}
\end{equation}

By making use of the assumptions \eqref{Phi2} and \eqref{Psi}, we derive from \eqref{n1b} the following equality
\begin{equation}
\langle Au_n,w\rangle + b(w,p_n) =\langle f,w\rangle  \quad\forall\,w\in V_0. 
\label{n23}
\end{equation}
Hence,
\[  b(w,p_{n}-p_{n-1}) = - \langle Au_{n}-Au_{n-1},w\rangle \quad\forall\,w\in V_0. \]
Applying the inf-sup condition \eqref{infsup},
\[ \alpha_b \|p_{n}-p_{n-1}\|_Q \le \sup_{w\in V_0} \frac{b(w,p_{n}-p_{n-1})}{\|w\|_V}
= \sup_{w\in V_0} \frac{-\langle Au_{n}-Au_{n-1},w\rangle}{\|w\|_V}\le M_A \|u_{n}-u_{n-1}\|_V, \]
or,
\begin{equation}
\|u_{n}-u_{n-1}\|_V \ge \frac{\alpha_b}{M_A}\,\|p_{n}-p_{n-1}\|_Q.
\label{n24}
\end{equation}
Combining \eqref{n22} and \eqref{n24},
\[ \|p_{n+1}-p_n\|_Q \le \kappa\,\|p_n-p_{n-1}\|_Q.\]
where for $\theta>0$ sufficiently small, which is guaranteed if $\rho>0$ is sufficiently small, we have
\[ \kappa:=\left(1-\theta\,\frac{\alpha_b^2}{M_A^2}\right)^{1/2}<1.\]
Then,
\begin{equation}
\|p_{n+1}-p_n\|_Q \le \kappa^n \|p_1-p_0\|_Q.
\label{n25}
\end{equation}
For $m>n$, we apply \eqref{n25} repeatedly to derive
\begin{align*}
\|p_m-p_n\|_Q & \le \|p_m-p_{m-1}\|_Q+\cdots +\|p_{n+1}-p_n\|_Q\\
&\le \left(\kappa^{m-1}+\cdots+\kappa^n\right) \|p_1-p_0\|_Q\\
&\le \frac{\kappa^n}{1-\kappa}\,\|p_1-p_0\|_Q.
\end{align*}
Since $\kappa<1$, we have 
\[ \|p_m-p_n\|_Q\to 0\quad{\rm as}\ m,n\to\infty,\]
i.e., $\{p_n\}$ is a Cauchy sequence.  Since $Q$ is complete, there exists an element $p\in Q$
such that 
\[ p_n\to p\quad{\rm in}\ Q.\]
Moreover, since $K_Q$ is closed, the limit $p\in K_Q$.

By \eqref{n21} and \eqref{n25},
\[ \|u_n-u_{n-1}\|_V\le c_0 \kappa^{n-1} \|p_1-p_0\|_Q.\]
Hence, for $m>n$,
\[ \|u_m-u_n\|_V \le \|u_m-u_{m-1}\|_V+\cdots+\|u_{n+1}-u_n\|_V\le \frac{c_0 \kappa^n}{1-\kappa}\,\|p_1-p_0\|_Q.\]
So $\{u_n\}$ is a Cauchy sequence in the complete space $V$, and thus has a limit $u\in V$.  Since 
$u_n\in K_V$ and $K_V$ is closed, $u\in K_V$.  

Note that  
\begin{align*}
& \left| \left(\Phi(u_n,v)-\Phi(u_n,u_n)\right) - \left(\Phi(u,v)-\Phi(u,u)\right) \right| \\
&\qquad \le \left| \Phi(u_n,v)-\Phi(u_n,u_n) - \Phi(u,v)+\Phi(u,u_n) \right| 
+ \left| \Phi(u,u)-\Phi(u,u_n)\right|  \\
&\qquad \le \alpha_\Phi\|u_n-u\|_V \|v-u_n\|_V + \left| \Phi(u,u)-\Phi(u,u_n)\right|  \\
&\qquad \to 0,
\end{align*}
and (cf.\ \cite[Proposition 3.23\,(ii)]{MOS2013})
\[ \limsup_{n\to\infty} \Psi^0(u_n;v-u_n) \le \Psi^0(u;v-u). \]
We take the upper limit of both sides of \eqref{n1b} to obtain \eqref{n1}.

Due to the continuity of $P_{K_Q}$, we take the limit in \eqref{n2b} to obtain
\[ p=P_{K_Q}(p+\rho\,Bu). \]
By Lemma \ref{lem:b}, this implies \eqref{n2}.

Hence, the limit $(u,p)\in K_V\times K_Q$ is a solution of Problem \ref{p3}. 

For uniqueness, let $(u_1,p_1)\in K_V\times K_Q$ and $(u_2,p_2)\in K_V\times K_Q$ be two solutions of 
Problem \ref{p3}.  Then, 
\begin{align}
&\langle Au_1,u_2-u_1\rangle +b(u_2-u_1,p_1)+\Phi(u_1,u_2)-\Phi(u_1,u_1) +\Psi^0(u_1;u_2-u_1)\ge\langle f,u_2-u_1\rangle, \label{n26}\\
&b(u_1,p_2-p_1)\leq 0 \label{n27}
\end{align}
and
\begin{align}
&\langle Au_2,u_1-u_2\rangle +b(u_1-u_2,p_2)+\Phi(u_2,u_1)-\Phi(u_2,u_2) +\Psi^0(u_2;u_1-u_2)\ge\langle f,u_1-u_2\rangle, \label{n28}\\
&b(u_2,p_1-p_2)\leq 0. \label{n29}
\end{align}
Add \eqref{n27} and \eqref{n29} to get
\begin{equation}
 -b(u_1-u_2,p_1-p_2)\leq 0. 
\label{n30}
\end{equation}
Then, add \eqref{n26} and \eqref{n28} to get
\begin{align}
\langle Au_1-Au_2,u_1-u_2\rangle
&\le \Phi(u_1,u_2)-\Phi(u_1,u_1)+\Phi(u_2,u_1)-\Phi(u_2,u_2) \nonumber\\
&\quad{}  +\Psi^0(u_1;u_2-u_1)+\Psi^0(u_2;u_1-u_2)-b(u_1-u_2,p_1-p_2).
\label{n30a}
\end{align}
Hence, by making use of $H(A)$, $H(\Phi)_2$, $H(\Psi)$ and \eqref{n30},
\begin{equation}
m_A \|u_1-u_2\|_V^2 \le \alpha_\Phi \|u_1-u_2\|_V^2 + \alpha_\Psi \|u_1-u_2\|_V^2, 
\label{n30b}
\end{equation}
i.e.,
\[ \left(m_A-\alpha_\Phi-\alpha_\Psi\right) \|u_1-u_2\|_V^2 \le 0. \]
By the smallness condition \eqref{small2}, we deduce that $\|u_1-u_2\|_V=0$ and $u_1=u_2$.

Similar to \eqref{n24}, we have
\[ \|p_1-p_2\|_Q\le \frac{\alpha_b}{M_A}\,\|u_1-u_2\|_V = 0, \]
implying the uniqueness of $p$: $p_1=p_2$.
\end{proof}

Now we present a Lipschitz continuous dependence property of the solution on the right side.

\begin{theorem}\label{thm:main3}
Keep the assumptions stated in Theorem \ref{thm:main2}. Then, there exists a constant $c>0$ such that
for solutions $(u_1,p_1),(u_2,p_2)\in K_V\times K_Q$ of Problem \ref{p3} with $f=f_1,f_2\in V^*$, 
the following inequality holds:
\begin{equation}
\|u_1-u_2\|_V + \|p_1-p_2\|_Q \le c\,\|f_1-f_2\|_{V^*}.
\label{n11}
\end{equation}
\end{theorem}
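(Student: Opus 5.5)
The argument repeats the uniqueness part of the proof of Theorem \ref{thm:main2}, now keeping the contribution of $f_1-f_2$. We first estimate $u_1-u_2$ from the two variational-hemivariational inequalities. Then we recover $p_1-p_2$ from an equation on $V_0$ via the inf-sup condition \eqref{infsup}.

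\emph{Step 1 (velocity bound).} We write \eqref{n1} for $(u_1,p_1)$ with $f=f_1$ and test with $v=u_2$. We write \eqref{n1} for $(u_2,p_2)$ with $f=f_2$ and test with $v=u_1$. Adding the two gives the analogue of \eqref{n30a} with the extra term $\langle f_1-f_2,u_1-u_2\rangle$ on the right side:
\begin{align*}
\langle Au_1-Au_2,u_1-u_2\rangle
&\le \Phi(u_1,u_2)-\Phi(u_1,u_1)+\Phi(u_2,u_1)-\Phi(u_2,u_2)\\
&\quad{}+\Psi^0(u_1;u_2-u_1)+\Psi^0(u_2;u_1-u_2)\\
&\quad{}-b(u_1-u_2,p_1-p_2)+\langle f_1-f_2,u_1-u_2\rangle.
\end{align*}
Inequality \eqref{n2} does not involve $f$. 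Hence \eqref{n27}, \eqref{n29} and therefore \eqref{n30} still hold, so $-b(u_1-u_2,p_1-p_2)\le 0$. Applying $H(A)$, \eqref{n3} and \eqref{eq4} yields
\[ (m_A-\alpha_\Phi-\alpha_\Psi)\|u_1-u_2\|_V^2\le \|f_1-f_2\|_{V^*}\|u_1-u_2\|_V. \]
By \eqref{small2} this gives $\|u_1-u_2\|_V\le (m_A-\alpha_\Phi-\alpha_\Psi)^{-1}\|f_1-f_2\|_{V^*}$.

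\emph{Step 2 (pressure bound).} We take $v=u_i\pm w$ in \eqref{n1} with $w\in V_0$. This test function is admissible because $u_i+V_0\subset K_V$, as noted after the hypotheses. By \eqref{Phi2} and \eqref{Psi}, the $\Phi$ and $\Psi^0$ terms vanish, exactly as in the derivation of \eqref{n23}. This gives
\[ \langle Au_i,w\rangle+b(w,p_i)=\langle f_i,w\rangle\quad\forall\,w\in V_0,\ i=1,2. \]
Subtracting the two identities gives
\[ b(w,p_1-p_2)=\langle f_1-f_2,w\rangle-\langle Au_1-Au_2,w\rangle. \]
The inf-sup condition \eqref{infsup} and \eqref{Lip:A} then give
\[ \alpha_b\|p_1-p_2\|_Q\le \|f_1-f_2\|_{V^*}+M_A\|u_1-u_2\|_V. \]
Inserting Step 1 and adding the two bounds gives \eqref{n11} with
\[ c=\frac{1+(1+M_A)\,(m_A-\alpha_\Phi-\alpha_\Psi)^{-1}\,\alpha_b}{\alpha_b}, \]
or any larger constant.

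There is no real obstacle here. The only point needing care is that the $b$-coupling term has a favorable sign independently of $f$, which comes from \eqref{n30}. The other point is that the identity on $V_0$ must be derived from the inequality \eqref{n1} itself, using the invariance assumptions \eqref{Phi2} and \eqref{Psi}.
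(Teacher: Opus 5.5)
Your proof is correct and follows the paper's own argument essentially step for step: the $f$-augmented version of \eqref{n30a}, with the $b$-term discarded by \eqref{n30}, gives \eqref{n30c}, and the identity on $V_0$ together with the inf-sup condition \eqref{infsup} gives the pressure bound. There is one arithmetic slip in your explicit constant: your two bounds combine to $c=\alpha_b^{-1}\bigl[1+(\alpha_b+M_A)(m_A-\alpha_\Phi-\alpha_\Psi)^{-1}\bigr]$, whereas your formula has $M_A\alpha_b$ in place of $M_A$ and is too small when $\alpha_b<1$; this does not affect the existence of $c$, which is all the theorem asserts.
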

\begin{proof}
We start with \eqref{n26}--\eqref{n29} with $f$ replaced by $f_1$ in \eqref{n26} and by $f_2$ in \eqref{n28}.  Then, \eqref{n30a} is modified to
\begin{align*}
\langle Au_1-Au_2,u_1-u_2\rangle
&\le \Phi(u_1,u_2)-\Phi(u_1,u_1)+\Phi(u_2,u_1)-\Phi(u_2,u_2)\\
&\quad{}  +\Psi^0(u_1;u_2-u_1)+\Psi^0(u_2;u_1-u_2)\\
&\quad{} -b(u_1-u_2,p_1-p_2) + \langle f_1-f_2,u_1-u_2\rangle
\end{align*}
whereas \eqref{n30b} is modified to
\[ m_A \|u_1-u_2\|_V^2 \le \alpha_\Phi \|u_1-u_2\|_V^2 + \alpha_\Psi \|u_1-u_2\|_V^2
+\|f_1-f_2\|_{V^*}\|u_1-u_2\|_V . \]
Then
\begin{equation}
\|u_1-u_2\|_V \le \frac{1}{m_A - \alpha_\Phi -\alpha_\Psi}\,\|f_1-f_2\|_{V^*}.
\label{n30c}
\end{equation}

Similar to \eqref{n23}, we have
\begin{align*}
& \langle Au_1,w\rangle + b(w,p_1) =\langle f_1,w\rangle  \quad\forall\,w\in V_0,\\
& \langle Au_2,w\rangle + b(w,p_2) =\langle f_2,w\rangle  \quad\forall\,w\in V_0. 
\end{align*}
Subtract the two equations to get
\[ b(w,p_1-p_2) =\langle f_1-f_2,w\rangle - \langle Au_1-Au_2,w\rangle \quad\forall\,w\in V_0. \]
By the inf-sup condition \eqref{infsup}, 
\[ \alpha_b\|p_1-p_2\|_Q \le \sup_{w\in V_0} \frac{1}{\|w\|_V}\left[ \langle f_1-f_2,w\rangle - \langle Au_1-Au_2,w\rangle \right]
\le \|f_1-f_2\|_{V^*} + M_A \|u_1-u_2\|_V. \]
This inequality and \eqref{n30c} together imply \eqref{n11}. 
\end{proof}

We now introduce a variant of Problem \ref{p3} which is directly related to applications.
To simplify the notation, for $\Delta$ a subset of $\Omega$ or a subset of its boundary, we use 
$I_\Delta(g)$ to denote the integral of an integrable function $g$ on $\Delta$.  For application 
problems, very often, the mixed  variational-hemivariational inequality of rank $(2,1)$ is in the form
of Problem \ref{p3} in which $\Psi^0(u;v-u)$ is replaced by an integral over $\Delta$: $I_\Delta(\psi^0(\gamma u;\gamma v-\gamma u))$. 
Here, for a positive integer $m$, $\psi\colon \mathbb{R}^m\to \mathbb{R}$ is locally Lipschitz continuous
and for a function space $V_\psi$ on $\Delta$, the operator $\gamma\colon V\to V_\psi$ is such that
$\psi^0(\gamma u;\gamma v)\in L^1(\Delta)$ for all $u,v\in V$.  In other words, for application problems,
instead of Problem \ref{p3}, we have its following variant.

\begin{problem}\label{p3v}
Find $(u,p)\in K_V \times K_Q$ such that
\begin{align}
&\langle Au,v-u\rangle +b(v-u,p)+\Phi(u,v)-\Phi(u,u)+I_\Delta(\psi^0(\gamma u;\gamma v-\gamma u))\nonumber\\
&\qquad  \ge\langle f,v-u\rangle \quad\forall\,v\in K_V,\label{n1v}\\
&b(u,q-p)\leq 0\quad\forall\, q\in K_Q.\label{n2v}
\end{align}
\end{problem}

For mixed variational-hemivariational inequalities in applications, associated with second-order PDEs, we let 
$V_\psi=L^2(\Delta)$, and denote by $c_\Delta>0$ the smallest constant in the inequality
\begin{equation}
I_\Delta(|\gamma v|_{\mathbb{R}^m}^2)\le c_\Delta^2 \|v\|_V^2\quad\forall\,v\in V.
\label{cd}
\end{equation}
We replace $H(\Psi)$ by the following assumptions on $\psi$.

\begin{itemize}
\item $H(\psi)$ \ For an integer $m\ge 1$, $\psi\colon \mathbb{R}^m\to \mathbb{R}$ is locally Lipschitz continuous, and there exist constants $c_\psi\ge 0$ and $\alpha_\psi\ge 0$ such that
\begin{align}
& |\partial \psi(z)|_{\mathbb{R}^m} \le c_\psi \left(1+|z|_{\mathbb{R}^m}\right)\quad\forall\,z\in\mathbb{R}^m, \label{psi1} \\
& \psi^0(z_1;z_2-z_1)+\psi^0(z_2;z_1-z_2)\le\alpha_\psi |z_1-z_2|_{\mathbb{R}^m}^2 \quad\forall\,z_1,z_2\in\mathbb{R}^m. \label{psi2}
\end{align}
\end{itemize}

We deduce from Theorems \ref{thm:main2} and \ref{thm:main3} the next result on Problem \ref{p3v}.

\begin{theorem}\label{thm:main3v}
Assume $H(K_V)$, $H(K_Q)$, $H(A)$, $H(b)$, $H(\Phi)_2$, $H(\psi)$, $H(f)$, and 
\begin{equation}
\alpha_\Phi+\alpha_\psi c_\Delta^2<m_A.
\label{small3v}
\end{equation}
Then, Problem \ref{p3v} has a unique solution $(u,p)\in K_V\times K_Q$.  Moreover, there exists a constant $c>0$ such that
for solutions $(u_1,p_1),(u_2,p_2)\in K_V\times K_Q$ of Problem \ref{p3v} with $f=f_1,f_2\in V^*$, 
the following inequality holds:
\[ \|u_1-u_2\|_V + \|p_1-p_2\|_Q \le c\,\|f_1-f_2\|_{V^*}. \]
\end{theorem}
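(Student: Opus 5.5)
The plan is to reduce Problem \ref{p3v} to Problem \ref{p3} and then invoke Theorems \ref{thm:main2} and \ref{thm:main3}. Define
\[ \Psi(v):=I_\Delta(\psi(\gamma v)),\quad v\in V. \]
From the growth condition \eqref{psi1} and \eqref{cd}, $\Psi$ is well defined and locally Lipschitz continuous on $V$. Indeed, by Lebourg's mean value theorem, $|\psi(z_1)-\psi(z_2)|\le c_\psi(1+|z_1|+|z_2|)|z_1-z_2|$. After integrating over $\Delta$ and applying Cauchy--Schwarz with \eqref{cd}, this gives $|\Psi(v_1)-\Psi(v_2)|\le c\,(1+\|v_1\|_V+\|v_2\|_V)\|v_1-v_2\|_V$. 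Here I assume, as is standard in the applications meant, that $\gamma$ is linear and bounded into $L^2(\Delta)^m$.

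The key step is the standard interchange inequality (cf.\ \cite{MOS2013}, or the corresponding results in \cite{Han2024}):
\[ \Psi^0(u;v)\le I_\Delta(\psi^0(\gamma u;\gamma v))\quad\forall\,u,v\in V. \]
To prove it, write the difference quotient of $\Psi$ as the integral of difference quotients of $\psi$. These are dominated by $c_\psi(1+|\gamma w|+\lambda|\gamma v|)|\gamma v|$, and the $\limsup$ is passed inside by Fatou's lemma. Handling $w\to u$ in $V$ requires passing to an a.e.\ convergent subsequence of $\gamma w$ with an $L^2$ dominating function, so that the reverse Fatou lemma applies. I expect this to be the main technical obstacle. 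Everything else is bookkeeping.

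With the interchange inequality, any solution of Problem \ref{p3} with this $\Psi$ solves Problem \ref{p3v}. By \eqref{psi2} and \eqref{cd},
\[ \Psi^0(v_1;v_2-v_1)+\Psi^0(v_2;v_1-v_2)\le \alpha_\psi I_\Delta(|\gamma(v_1-v_2)|^2)\le\alpha_\psi c_\Delta^2\|v_1-v_2\|_V^2, \]
so \eqref{eq4} holds with $\alpha_\Psi=\alpha_\psi c_\Delta^2$, and \eqref{small3v} becomes \eqref{small2}. Condition \eqref{Psi} also needs checking. If it is not automatic, I would rather rerun the proof of Theorem \ref{thm:main2} directly with $I_\Delta(\psi^0(\gamma u;\gamma v-\gamma u))$ in place of $\Psi^0$. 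In that proof, \eqref{Psi} is used only to derive \eqref{n23}, and in the Stokes setting this follows because $\gamma w=0$ for $w\in V_0$, which gives $\psi^0(\gamma u;0)=0$.

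This direct route is also the cleanest one for uniqueness and stability, since those arguments must work for solutions of \eqref{n1v} itself, not of \eqref{n1}. The argument repeats the proofs line by line with $\alpha_\Psi$ replaced by $\alpha_\psi c_\Delta^2$. For existence, the iteration \eqref{n1b} uses the solvability of the rank-$(2,1)$ variational-hemivariational inequality with the integral term, available in \cite{Han2024}. Passing to the limit uses the upper semicontinuity of $(u,v)\mapsto I_\Delta(\psi^0(\gamma u;\gamma v))$, which follows again from the reverse Fatou lemma with the dominating bound from \eqref{psi1}. The inf-sup estimates for $p$ are unchanged.
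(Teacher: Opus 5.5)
Your proposal follows the paper's proof essentially step for step: define $\Psi(v)=I_\Delta(\psi(\gamma v))$, use $\Psi^0(u;v)\le I_\Delta(\psi^0(\gamma u;\gamma v))$ to get \eqref{eq4} with $\alpha_\Psi=\alpha_\psi c_\Delta^2$, obtain existence from Theorems \ref{thm:main2} and \ref{thm:main3}, and redo uniqueness directly on \eqref{n1v}. Your doubt about \eqref{Psi} is justified: it does not follow from $H(\psi)$, and the paper glosses over this (with $K_V=V=V_0=\mathbb{R}^2$, $K_Q=Q=\mathbb{R}$, $A=I$, $b(v,q)=v_1q$, $\Phi=0$, $\psi=|\cdot|$ and $\gamma v=v_1$ on a set $\Delta$ of unit measure, every $p\in[f_1-1,f_1+1]$ gives a solution), so an extra hypothesis such as $\gamma w=0$ for $w\in V_0$ is genuinely needed; once it is assumed, \eqref{Psi} holds for $\Psi$ immediately because $\Psi(z+\lambda w)=\Psi(z)$, and you can skip rerunning the existence proof.
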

\begin{proof}
Introduce an auxiliary functional $\Psi\colon V\to\mathbb{R}$ by
\[ \Psi(v)=I_\Delta(\psi(\gamma v)),\quad v\in V. \]
Then, by \cite[Section 3.3]{MOS2013}, $\Psi$ is well-defined and is locally Lipschitz continuous on $V$.
Moreover,
\begin{equation} 
\Psi^0(u;v)\le I_\Delta(\psi^0(\gamma u;\gamma v))\quad\forall\,u, v\in V, 
\label{Ppsi}
\end{equation}
and 
\[ \Psi^0(v_1;v_2-v_1) + \Psi^0(v_2;v_1-v_2) \le \alpha_\psi c_\Delta^2 \|v_1-v_2\|_V^2\quad\forall\,v_1,v_2\in V.\]
Hence, $H(\psi)$ implies $H(\Psi)$ with $\alpha_\Psi=\alpha_\psi c_\Delta^2$.  From Theorems \ref{thm:main2} and \ref{thm:main3}, we know that 
Problem \ref{p3} has a unique solution $(u,p)\in K_V\times K_Q$ and the solution depends Lipschitz 
continuously on $f$. Thanks to the property \eqref{Ppsi}, $(u,p)$ is also a solution of Problem \ref{p3v}. 
Moreover, the solution uniqueness of Problem \ref{p3v} can be shown by slightly modifying the argument of
the solution uniqueness of Problem \ref{p3} in the last part of the proof of Theorem \ref{thm:main2}.
Consequently, the statements of Theorem \ref{thm:main3v} hold regarding Problem \ref{p3v}.
\end{proof}

Finally, we consider degenerate special cases of Problem \ref{p3} and Problem \ref{p3v} where $\Phi$ depends on only one variable.
Let $\Phi\colon V\to\real$.

\begin{problem}\label{p1} 
Find $(u,p)\in K_V \times K_Q$ such that
\begin{align}
	&\langle Au,v-u\rangle +b(v-u,p)+\Phi(v)-\Phi(u) +\Psi^0(u;v-u)\ge\langle f,v-u\rangle
	\quad\forall\,v\in K_V,\label{eq:1}\\
	&b(u,q-p)\leq 0\quad\forall\, q\in K_Q.\label{eq:2}
\end{align}
\end{problem}

By the nomenclature used in \cite{Han2024}, Problem \ref{p1} is called a mixed variational-hemivariational 
inequality of rank $(1,1)$ since both the functions $\Phi$ and $\Psi$ depend on one variable. 
In the study of Problem \ref{p1}, we replace the assumption $H(\Phi)_2$ by $H(\Phi)_1$.

\smallskip
\begin{itemize}
\item $H(\Phi)_1$ \ $\Phi\colon V\to\mathbb{R}$ is convex and continuous, and
\begin{equation}
\Phi(v+w)=\Phi(v)\quad\forall\,v\in K_V,\,w\in V_0.
\label{Phi1}
\end{equation}
\end{itemize}
\smallskip

As a corollary of Theorems \ref{thm:main2} and \ref{thm:main3}, we have the next result on Problem \ref{p1}.

\begin{theorem}\label{thm:main1}
Assume $H(K_V)$, $H(K_Q)$, $H(A)$, $H(b)$, $H(\Phi)_1$, $H(\Psi)$, $H(f)$, and $\alpha_\Psi<m_A$.
Then, Problem \ref{p1} has a unique solution $(u,p)\in K_V\times K_Q$.  Moreover, there exists a constant $c>0$ such that
for solutions $(u_1,p_1),(u_2,p_2)\in K_V\times K_Q$ of Problem \ref{p1} with $f=f_1,f_2\in V^*$, 
the following inequality holds:
\[ \|u_1-u_2\|_V + \|p_1-p_2\|_Q \le c\,\|f_1-f_2\|_{V^*}. \]
\end{theorem}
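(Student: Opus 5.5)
The plan is to treat Problem \ref{p1} as the special case of Problem \ref{p3} in which the two-variable functional does not depend on its first argument. Given $\Phi\colon V\to\real$ satisfying $H(\Phi)_1$, I will define $\tilde\Phi\colon V\times V\to\real$ by $\tilde\Phi(u,v):=\Phi(v)$. Then $\tilde\Phi(u,v)-\tilde\Phi(u,u)=\Phi(v)-\Phi(u)$, so \eqref{eq:1}--\eqref{eq:2} coincide exactly with \eqref{n1}--\eqref{n2} written with $\tilde\Phi$. Hence both existence/uniqueness and Lipschitz dependence on $f$ will follow from Theorems \ref{thm:main2} and \ref{thm:main3}, once I check that $\tilde\Phi$ satisfies $H(\Phi)_2$ with $\alpha_\Phi=0$.

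The verification of $H(\Phi)_2$ has three parts.
\begin{enumerate}
\item For each fixed $u$, the map $\tilde\Phi(u,\cdot)=\Phi$ is convex and continuous. Continuity gives boundedness above on a non-empty open set: around any $v_0$ there is a ball on which $\Phi<\Phi(v_0)+1$.
\item The left-hand side of \eqref{n3} is $\Phi(v_2)-\Phi(v_1)+\Phi(v_1)-\Phi(v_2)=0$, so \eqref{n3} holds with $\alpha_\Phi=0$.
\item Condition \eqref{Phi2} for $\tilde\Phi$ reads $\Phi(v+w)=\Phi(v)$ for $v\in K_V$, $w\in V_0$, which is exactly \eqref{Phi1}.
\end{enumerate}
The remaining hypotheses $H(K_V)$, $H(K_Q)$, $H(A)$, $H(b)$, $H(\Psi)$ and $H(f)$ are assumed in the statement unchanged. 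The smallness condition \eqref{small2} becomes $0+\alpha_\Psi<m_A$, which is the stated assumption.

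Theorem \ref{thm:main2} then yields a unique solution $(u,p)\in K_V\times K_Q$ of Problem \ref{p1}. Theorem \ref{thm:main3} yields the estimate $\|u_1-u_2\|_V+\|p_1-p_2\|_Q\le c\,\|f_1-f_2\|_{V^*}$. Tracing its proof, the constant can be taken as $c=(1+(1+M_A/\alpha_b)/(m_A-\alpha_\Psi))/\alpha_b$.

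There is no real obstacle here. The only point needing a moment's care is that $H(\Phi)_2$ asks for boundedness above on an open set rather than continuity, and this follows directly from continuity as noted in step 1.
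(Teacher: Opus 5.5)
Your proof is correct and is exactly what the paper intends: the paper presents Theorem \ref{thm:main1} as a direct corollary of Theorems \ref{thm:main2} and \ref{thm:main3}, which amounts to your embedding $\tilde\Phi(u,v)=\Phi(v)$, satisfying $H(\Phi)_2$ with $\alpha_\Phi=0$. One small slip, which does not affect the theorem since it only asserts that some $c$ exists: tracing the proof of Theorem \ref{thm:main3} gives $c=1/\alpha_b+(1+M_A/\alpha_b)/(m_A-\alpha_\Psi)$, whereas your parenthesization divides the second term by an extra $\alpha_b$, and that smaller constant is only justified when $\alpha_b\le 1$.
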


Similar to Problem \ref{p3v}, we introduce a variant of Problem \ref{p1}.

\begin{problem}\label{p1v}
Find $(u,p)\in K_V \times K_Q$ such that
\begin{align}
&\langle Au,v-u\rangle +b(v-u,p)+\Phi(v)-\Phi(u)+I_\Delta(\psi^0(\gamma u;\gamma v-\gamma u))
\ge\langle f,v-u\rangle \quad\forall\,v\in K_V,\label{n1v1}\\
&b(u,q-p)\leq 0\quad\forall\, q\in K_Q.\label{n2v1}
\end{align}
\end{problem}

\begin{theorem}\label{thm:main1v}
Assume $H(K_V)$, $H(K_Q)$, $H(A)$, $H(b)$, $H(\Phi)_1$, $H(\psi)$, $H(f)$, and $\alpha_\psi c_\Delta^2<m_A$.
Then, Problem \ref{p1v} has a unique solution $(u,p)\in K_V\times K_Q$.  Moreover, there exists a constant $c>0$ such that
for solutions $(u_1,p_1),(u_2,p_2)\in K_V\times K_Q$ of Problem \ref{p3v} with $f=f_1,f_2\in V^*$, 
the following inequality holds:
\[ \|u_1-u_2\|_V + \|p_1-p_2\|_Q \le c\,\|f_1-f_2\|_{V^*}. \]
\end{theorem}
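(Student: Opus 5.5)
The plan is to deduce Theorem \ref{thm:main1v} from Theorem \ref{thm:main3v}, in the same way that Theorem \ref{thm:main1} follows from Theorems \ref{thm:main2} and \ref{thm:main3}. Problem \ref{p1v} is the special case of Problem \ref{p3v} in which the two-variable functional does not depend on its first argument.

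Given $\Phi\colon V\to\real$ satisfying $H(\Phi)_1$, define $\tilde\Phi(u,v):=\Phi(v)$ for $u,v\in V$. Then $\tilde\Phi(u,v)-\tilde\Phi(u,u)=\Phi(v)-\Phi(u)$, so \eqref{n1v1} is exactly \eqref{n1v} with $\tilde\Phi$ in place of $\Phi$. Hence Problem \ref{p1v} coincides with Problem \ref{p3v} for the data $\tilde\Phi$. Next I check $H(\Phi)_2$ for $\tilde\Phi$:
\begin{itemize}
\item For each $u$, the function $\tilde\Phi(u,\cdot)=\Phi$ is convex and continuous. Continuity gives boundedness above on a ball, since $\Phi$ is bounded by $\Phi(v_0)+1$ on a neighbourhood of any $v_0$.
\item The left-hand side of \eqref{n3} equals $\Phi(v_2)-\Phi(v_1)+\Phi(v_1)-\Phi(v_2)=0$, so \eqref{n3} holds with $\alpha_\Phi=0$.
\item Property \eqref{Phi2} follows directly from \eqref{Phi1}.
\end{itemize}
With $\alpha_\Phi=0$, the smallness condition \eqref{small3v} reduces to the assumed $\alpha_\psi c_\Delta^2<m_A$.

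Theorem \ref{thm:main3v} then gives existence and uniqueness of $(u,p)\in K_V\times K_Q$, together with the Lipschitz estimate $\|u_1-u_2\|_V+\|p_1-p_2\|_Q\le c\,\|f_1-f_2\|_{V^*}$ for solutions with data $f_1,f_2$.

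The statement of Theorem \ref{thm:main1v} refers to ``solutions of Problem \ref{p3v}''. I read this as solutions of Problem \ref{p1v}, which under the identification above are the same thing, so no extra work is needed there.

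There is no real obstacle. The only point needing care is the uniqueness part of Theorem \ref{thm:main3v}, which the paper proves by modifying the uniqueness argument of Theorem \ref{thm:main2}, and whether it applies verbatim here. With $\alpha_\Phi=0$ the argument goes through. Adding the two inequalities for two solutions, the $\Phi$ terms cancel exactly. The integral terms are bounded via \eqref{psi2} and \eqref{cd} by $\alpha_\psi c_\Delta^2\|u_1-u_2\|_V^2$. This forces $u_1=u_2$, and the inf-sup condition \eqref{infsup} then gives $p_1=p_2$.
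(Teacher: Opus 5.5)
Your proposal is correct and follows essentially the paper's route. The paper states this result without proof, but treats the one-variable case as the $\alpha_\Phi=0$ specialization, just as Theorem \ref{thm:main1} is presented as a corollary of Theorems \ref{thm:main2}--\ref{thm:main3}. You do the same: set $\tilde\Phi(u,v)=\Phi(v)$, check $H(\Phi)_2$ with $\alpha_\Phi=0$, invoke Theorem \ref{thm:main3v}, and read ``Problem \ref{p3v}'' in the statement as Problem \ref{p1v}.

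One caveat, inherited from Theorem \ref{thm:main3v} rather than introduced by you. Your inf-sup step for $p$ needs the equation $\langle Au,w\rangle+b(w,p)=\langle f,w\rangle$ on $V_0$. That in turn needs the analogue of \eqref{Psi}, $I_\Delta(\psi^0(\gamma v;\gamma w))=0$ for $v\in K_V$, $w\in V_0$. This is not part of $H(\psi)$; it holds in the applications because $\gamma w=0$ for $w\in V_0$.
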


\section{Numerical analysis}\label{sec:na}

In this section, we consider the numerical solution of Problem \ref{p3v}. We keep the assumptions stated 
in Theorem \ref{thm:main3v} so that Problem \ref{p3v} has a unique solution.  Let $K_V^h\subset K_V$ and 
$K_Q^h\subset K_Q$ be finite dimensional approximations of $K_V$ and $K_Q$.  Then, the numerical method is
the following.

\begin{problem}\label{p3h}
Find $(u^h,p^h)\in K_V^h \times K_Q^h$ such that
\begin{align}
&\langle Au^h,v^h-u^h\rangle +b(v^h-u^h,p^h)+\Phi(u^h,v^h)-\Phi(u^h,u^h) +I_\Delta(\psi^0(\gamma u^h;\gamma v^h-\gamma u^h))\nonumber\\
&\quad \ge\langle f,v^h-u^h\rangle \quad\forall\,v^h\in K_V^h,\label{n1h}\\
&b(u^h,q^h-p^h)\leq 0\quad\forall\, q^h\in K_Q^h.\label{n2h}
\end{align}
\end{problem}

Denote 
\[ V^h_0=V_0\cap K_V^h\]
and assume the finite element space pair $V^h\times Q^h$ satisfies the discrete inf-sup condition
(Babu\v{s}ka-Brezzi condition, or simply BB condition)
\begin{equation}
\alpha_b \|q^h\|_Q\le \sup_{w^h\in V^h_0} \frac{b(w^h,q^h)}{\|w^h||_V},
\label{BB}
\end{equation}
where $\alpha_b>0$ is a constant independent of $h$.

Under the assumptions stated in Theorem \ref{thm:main3v} and \eqref{BB}, Problem \ref{p3h} has a unique solution.  The 
rest of the section is devoted to an error analysis of the numerical method.  Let $(v^h,q^h)\in K_V^h \times K_Q^h$ be arbitrary.  We will use the modified Cauchy-Schwarz inequality several times:
\begin{equation}
a\,b\le \epsilon\,a^2 + \frac{1}{4\,\epsilon}\,b^2\quad\forall\,\epsilon>0,\ a,b\in\mathbb{R}.
\label{mCS}
\end{equation}

Take $v=u^h$ in \eqref{n1} and $q=p^h$ in \eqref{n2} to get
\begin{align}
&\langle Au,u^h-u\rangle +b(u^h-u,p)+\Phi(u,u^h)-\Phi(u,u) +I_\Delta(\psi^0(\gamma u;\gamma u^h-\gamma u))\ge\langle f,u^h-u\rangle,\label{n3h}\\
&b(u,p^h-p)\leq 0.\label{n4h}
\end{align}
Write
\[ \langle Au-Au^h, u-u^h\rangle = \langle Au,u-u^h\rangle+\langle Au^h,u^h-v^h\rangle+\langle Au,v^h-u\rangle. \]
Use \eqref{n3h} and \eqref{n1h} to obtain
\begin{equation}
\langle Au-Au^h, u-u^h\rangle = I_A + I_R + I_b +I_\Phi+I_\psi,
\label{n5h}
\end{equation}
where
\begin{align}
I_A & =\langle Au-Au^h, u-v^h\rangle,\label{n6h}\\
I_R &=\langle Au,v^h-u\rangle+b(v^h-u,p)+\Phi(u,v^h)-\Phi(u,u)+I_\Delta(\psi^0(\gamma u;\gamma v^h-\gamma u))-\langle f,v^h-u\rangle,
\label{n7h}\\
I_b & = b(u^h-v^h,p-p^h),\label{n8h}\\
I_\Phi & =\Phi(u,u^h)-\Phi(u,v^h)+\Phi(u^h,v^h)-\Phi(u^h,u^h),\label{n9h}\\
I_\psi & =I_\Delta( \psi^0(\gamma u;\gamma u^h-\gamma u)+ \psi^0(\gamma u^h;\gamma u-\gamma u^h)+
\psi^0(\gamma u;\gamma u-\gamma v^h)+\psi^0(\gamma u^h;\gamma v^h-\gamma u) ). \label{n10h}
\end{align}
Note that $I_R$ is a residual-type quantity.

We now bound the terms on the right side of \eqref{n5h}.  For $I_A$, we use the Lipschitz continuity of $A$:
\[ I_A\le M_A \|u-u^h\|_V \|u-v^h\|_V. \]
For $I_\Phi$, apply \eqref{n3} followed by the use of the triangle inequality:
\[ I_\Phi \le \alpha_\Phi \|u-u^h\|_V \|u^h-v^h\|_V
\le \alpha_\Phi \|u-u^h\|_V \left(\|u-u^h\|_V + \|u-v^h\|_V\right).\]
Then, use \eqref{mCS},
\[ \alpha_\Phi \|u-u^h\|_V \|u-v^h\|_V\le \epsilon\,\|u-u^h\|_V^2 +c\, \|u-v^h\|_V^2\]
for a constant $c$ depending on $\epsilon$.  Hence,
\[ I_\Phi\le \left(\alpha_\Phi+\epsilon\right) \|u-u^h\|_V^2 + c_\epsilon \|u-v^h\|_V^2.\]
For $I_\psi$, we use $H(\psi)$ and \eqref{cd} to get
\begin{align*}
I_\psi & \le\alpha_\psi I_\Delta(|\gamma(u-u^h)|_{V_\psi}^2)+c\left(1+\|\gamma u\|_{V_\psi}+\|\gamma u^h\|_{V_\psi}\right)\|\gamma (u-v^h)\|_{V_\psi}\\
& \le\alpha_\psi c_\Delta^2 \|u-u^h\|_V^2+c\left(1+\|\gamma u\|_{V_\psi}+\|\gamma (u-u^h)\|_{V_\psi}\right)\|\gamma (u-v^h)\|_{V_\psi}\\
& \le\alpha_\psi c_\Delta^2 \|u-u^h\|_V^2+c\left(1+\|\gamma u\|_{V_\psi}\right)\|\gamma (u-v^h)\|_{V_\psi} + c\,\|u-u^h\|_V \|u-v^h\|_V.
\end{align*}
Apply the modified Cauchy-Schwarz inequality \eqref{mCS} to get
\[ c\,\|u-u^h\|_V \|u-v^h\|_V \le \epsilon\,\|u-u^h\|_V^2 + c\, \|u-v^h\|_V \]
for a constant $c$ depending on $\epsilon$ on the right side of the inequality.  Thus,
\[ I_\psi \le \left(\alpha_\psi c_\Delta^2+\epsilon\right) \|u-u^h\|_V^2+c\left(1+\|\gamma u\|_{V_\psi}\right)\|\gamma (u-v^h)\|_{V_\psi}. \]
Regarding $I_b$, write
\[ I_b =b(u^h-u,p-q^h) + b(u-v^h,p-p^h) + b(u,p-q^h)+ b(u^h,q^h-p^h) + b(u,p^h-p).\]
From \eqref{n2},
\[ b(u,p^h-p)\le 0.\]
From \eqref{n2h}, 
\[ b(u^h,q^h-p^h)\le 0. \]
Hence,
\begin{equation}
I_b \le b(u^h-u,p-q^h) + b(u-v^h,p-p^h) + b(u,p-q^h), 
\label{Ib1}
\end{equation}
from which,
\begin{equation}
I_b \le M_b \left( \|u^h-u\|_V \|p-q^h\|_Q + \|u-v^h\|_V \|p-p^h\|_Q + \|u\|_V \|p-q^h\|_Q\right).
\label{Ib2}
\end{equation}
Applying \eqref{mCS}, we then have
\[ I_b \le \epsilon\left(\|u-u^h\|_V^2 + \|p-p^h\|_Q^2\right) 
+ c\left(\|u-v^h\|_V^2 + \|p-q^h\|_Q^2 + \|p-q^h\|_Q\right). \]
Summarizing, from \eqref{n5h} and the above bounds, we derive that
\begin{align}
\left(m_A-\alpha_\Phi-3\epsilon\right)\|u-u^h\|_V^2 
& \le \epsilon\,\|p-p^h\|_Q^2 + I_R \nonumber\\
&\quad {} +c\left(\|u-v^h\|_V^2+\|\gamma(u-v^h)\|_{V_\psi} + \|p-q^h\|_Q^2 + \|p-q^h\|_Q\right). 
\label{bd:u1}
\end{align}

Turning to an error estimate on $p-p^h$, for any $q^h\in Q^h$, by the triangle inequality,
\[ \|p-p^h\|_Q \le \|p-q^h\|_Q + \|q^h-p^h\|_Q. \]
Thus,
\begin{equation}
\|p-p^h\|_Q^2 \le 2\,\|p-q^h\|_Q^2 + 2\,\|q^h-p^h\|_Q^2 .
\label{bd:p1}
\end{equation}
Apply the BB condition \eqref{BB},
\[ \alpha_b \|q^h-p^h\|_Q\le \sup_{w^h\in V^h_0} \frac{b(w^h,q^h-p^h)}{\|w^h||_V}. \]
Similar to \eqref{n23}, due to the assumptions \eqref{Phi2} and \eqref{Psi}, we have the equality
\[ \langle Au,w\rangle + b(w,p) =\langle f,w\rangle  \quad\forall\,w\in V_0.  \]
Moreover, we derive from \eqref{n1h} that
\[ \langle Au^h,w^h\rangle + b(w^h,p^h) =\langle f,w^h\rangle  \quad\forall\,w^h\in V_0^h.  \]
From the last two equalities, we find that
\[ b(w^h,p-p^h) = \langle Au^h - Au,w^h\rangle  \quad\forall\,w^h\in V_0^h.  \]
Now,
\[ b(w^h,q^h-p^h)=b(w^h,q^h-p)+b(w^h,p-p^h)=b(w^h,q^h-p)+\langle Au^h - Au,w^h\rangle.\]
Hence,
\[ \alpha_b \|q^h-p^h\|_Q\le\sup_{w^h\in V^h_0}\frac{b(w^h,q^h-p)+\langle Au^h-Au,w^h\rangle}{\|w^h||_V}
\le M_A \|u-u^h\|_V + M_b \|p-q^h\|_Q. \]
This implies 
\[ \|q^h-p^h\|_Q^2 \le \left[ (M_A/\alpha_b)^2+1\right] \|u-u^h\|_V^2 + c\, \|p-q^h\|_Q^2. \]
From the above inequality and \eqref{bd:p1}, 
\[ \|p-p^h\|_Q^2 \le 2\left[ (M_A/\alpha_b)^2+1\right] \|u-u^h\|_V^2 + c\, \|p-q^h\|_Q^2. \]
Together with \eqref{bd:u1}, when choosing $\epsilon>0$ small enough, we have two positive constants 
$c_1,c>0$ such that
\begin{equation}
\|u-u^h\|_V^2 + \|p-p^h\|_Q^2 \le c_1 I_R 
 +c\left(\|u-v^h\|_V^2 +\|\gamma(u-v^h)\|_{V_\psi} + \|p-q^h\|_Q^2 + \|p-q^h\|_Q\right). 
\label{bd:1}
\end{equation} 

Next, consider the special case where $K_Q$ and $K_Q^h$ are subspaces.  Then, \eqref{n2} is equivalent to
\[ b(u,q)=0\quad\forall\,q\in K_Q,\]
and \eqref{n2h} is equivalent to
\[ b(u^h,q^h)=0\quad\forall\,q^h\in K_Q^h.\]
Write
\[ I_b = b(u^h-u,p-p^h) + b(u-v^h,p-p^h).\]
Then,
\[ b(u^h-u,p-p^h) = b(u^h,p-p^h) = b(u^h,p-q^h) = b(u^h-u,p-q^h),\]
and so instead of \eqref{Ib1},
\begin{align*}
I_b & =b(u^h-u,p-q^h)+b(u-v^h,p-p^h)\\
& \le M_b\left( \|u-u^h\|_V \|p-q^h\|_Q + \|u-v^h\|_V \|p-p^h\|_Q\right),
\end{align*}
and instead of \eqref{Ib2}, we have
\begin{equation}
I_b \le M_b \left( \|u^h-u\|_V \|p-q^h\|_Q + \|u-v^h\|_V \|p-p^h\|_Q \right).
\label{Ib3}
\end{equation}
Eventually, \eqref{bd:1} is improved to
\begin{equation}
\|u-u^h\|_V^2 + \|p-p^h\|_Q^2\le c_1 I_R
 +c\left(\|u-v^h\|_V^2 +\|\gamma(u-v^h)\|_{V_\psi} + \|p-q^h\|_Q^2\right). 
\label{bd:2}
\end{equation}

In conclusion, we have proved the following result.

\begin{theorem}\label{thm:main3vh}
Keep the assumptions stated in Theorem \ref{thm:main3v} and \eqref{BB}.  Then, Problem \ref{p3h}
has a unique solution $(u^h,p^h)\in K_V^h \times K_Q^h$.  For any $(\bv^h,q^h)\in K_V^h\times K_Q^h$, 
we have the error bound \eqref{bd:1}.  Furthermore, if $K_Q$ and $K_Q^h$ are subspaces,
for any $(\bv^h,q^h)\in K_V^h\times K_Q^h$, we have the error bound \eqref{bd:2}.  
\end{theorem}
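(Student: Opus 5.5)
The statement has two parts: well-posedness of Problem \ref{p3h}, and the error bounds \eqref{bd:1} and \eqref{bd:2}. The error bounds were already derived in the text preceding the statement. So the plan is mainly to settle existence and uniqueness of $(u^h,p^h)$, and then to check that the derivation of \eqref{bd:1}--\eqref{bd:2} used only valid ingredients.

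\textbf{Well-posedness.} I will regard Problem \ref{p3h} as an instance of Problem \ref{p3v} posed on the discrete sets and apply Theorem \ref{thm:main3v} (equivalently, Theorems \ref{thm:main2}--\ref{thm:main3} through the auxiliary functional $\Psi(v)=I_\Delta(\psi(\gamma v))$). The replacements are:
\begin{itemize}
\item $V$ by the finite-dimensional space $V^h$ spanned by $K_V^h$, or simply $V$ itself with $K_V$ replaced by $K_V^h$;
\item $K_Q$ by $K_Q^h$, with $Q$ replaced by the finite-dimensional space $Q^h\supset K_Q^h$;
\item $V_0$ by $V_0^h$.
\end{itemize}

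Next I check the hypotheses.
\begin{itemize}
\item $K_V^h$ and $K_Q^h$ are closed and convex, being convex subsets of finite-dimensional spaces. I take them nonempty and assume $V_0^h$ is a subspace contained in $K_V^h$, as implicitly required by \eqref{BB}.
\item $H(A)$, the bound in $H(b)$, $H(\Phi)_2$ with \eqref{n3}, $H(\psi)$ and \eqref{cd} restrict to subspaces with the same constants. Hence the smallness condition \eqref{small3v} is inherited verbatim.
\item The invariance properties \eqref{Phi2} and \eqref{Psi} hold for $v\in K_V^h\subset K_V$ and $w\in V_0^h\subset V_0$.
\item The continuous inf-sup condition \eqref{infsup} is replaced by the discrete one \eqref{BB}. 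This is the condition that gives \eqref{n23}-type identities and the bound \eqref{n24} in the projection iteration, now with $B$ replaced by its $Q^h$-projected version $B^h$ defined by $(B^hv,q^h)_Q=b(v,q^h)$.
\end{itemize}

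With these checks, the projection iteration of Theorem \ref{thm:main2}, run with $P_{K_Q^h}$ in place of $P_{K_Q}$, gives existence. The uniqueness argument carries over unchanged.

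\textbf{Error bounds.} I would re-trace the chain of estimates in order:
\begin{enumerate}
\item the identity \eqref{n5h};
\item the bounds on $I_A$, $I_\Phi$ and $I_\psi$, which use \eqref{n3}, \eqref{psi1}, \eqref{psi2} and \eqref{cd};
\item the bound on $I_b$, which uses $u^h\in K_V$, $p^h\in K_Q$ in \eqref{n2} and $q^h\in K_Q^h$ in \eqref{n2h};
\item the estimate for the pressure via \eqref{BB} and the two equilibrium identities on $V_0$ and $V_0^h$.
\end{enumerate}
Then $\epsilon$ is chosen so that $m_A-\alpha_\Phi-\alpha_\psi c_\Delta^2-c\epsilon>0$, and $\epsilon\|p-p^h\|_Q^2$ is absorbed using the pressure bound.

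The $I_\psi$ estimate relies on $|\psi^0(z;w)|\le c_\psi(1+|z|)|w|$, which follows from \eqref{psi1}. Note also that \eqref{bd:u1} as displayed drops $\alpha_\psi c_\Delta^2$ on its left side. The correct constant is $m_A-\alpha_\Phi-\alpha_\psi c_\Delta^2-3\epsilon$, and it is positive by \eqref{small3v}, so the conclusion is unaffected. For the subspace case, the identity $b(u^h-u,p-p^h)=b(u^h-u,p-q^h)$ removes the linear term $\|p-q^h\|_Q$ and yields \eqref{bd:2}.

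The main obstacle is the existence step in the discrete setting. One must confirm that \eqref{BB} over $V_0^h=V_0\cap K_V^h$ is the right substitute for \eqref{infsup}, so that the contraction factor $\kappa$ stays below one uniformly in $h$. One must also ensure that the auxiliary variational-hemivariational inequality \eqref{n1b} is solvable on $K_V^h$, which holds by \cite[Theorem 5.9]{Han2024} applied on the closed convex set $K_V^h$.
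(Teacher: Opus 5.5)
Your proposal is correct and follows the paper's route. The paper likewise takes discrete well-posedness from the continuous theory, with $K_V^h$, $K_Q^h$, $V_0^h$ and \eqref{BB} in place of $K_V$, $K_Q$, $V_0$ and \eqref{infsup}; it gets \eqref{bd:1}--\eqref{bd:2} from the decomposition \eqref{n5h}, the pressure estimate via \eqref{BB}, and absorption for small $\epsilon$, and you are right that the coefficient in \eqref{bd:u1} should also subtract $\alpha_\psi c_\Delta^2$. One small slip: convexity does not imply closedness even in finite dimensions, so closedness (like convexity) of $K_V^h$ and $K_Q^h$ must be assumed rather than deduced.
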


A crude bound on the term $I_R$ is
\[ |I_R|\le c\left(1+\|u\|_Q+\|p\|_Q+\|f\|_{V^*}\right)\|u-v^h\|_V+c\left|\Phi(u,v^h)-\Phi(u,u)\right|. \]
Then we deduce convergence of the numerical solutions from either \eqref{bd:1} or \eqref{bd:2}.

\begin{corollary}
Keep the assumptions stated in Theorem \ref{thm:main3v} and \eqref{BB}.  If $\cup_{h>0}K_V^h$ is dense 
in $K_V$ with respect to the $\|\cdot\|_V$-norm and $\cup_{h>0}K_Q^h$ is dense in $K_Q$ with respect
to the $\|\cdot\|_Q$-norm, then we have the convergence
\[ \|u-u^h\|_V + \|p-p^h\|_Q \to 0\quad\forall\,h\to 0.\]
\end{corollary}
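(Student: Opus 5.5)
The plan is to deduce the convergence from the error bound \eqref{bd:1} of Theorem \ref{thm:main3vh}, combined with the crude bound on $I_R$ stated just before the corollary. The uniqueness and existence of $(u^h,p^h)$ are already provided by that theorem, so only convergence remains. Since the left side of \eqref{bd:1} does not depend on the choice of $(v^h,q^h)\in K_V^h\times K_Q^h$, it suffices to show that for every $\delta>0$ there are $h_0>0$ and, for $h<h_0$, elements $(v^h,q^h)$ making the right side of \eqref{bd:1} smaller than $\delta$.

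I will first bound the right side of \eqref{bd:1} by quantities that vanish as $v^h\to u$ in $V$ and $q^h\to p$ in $Q$. The terms $\|u-v^h\|_V^2$, $\|p-q^h\|_Q^2$ and $\|p-q^h\|_Q$ are immediate. For $\|\gamma(u-v^h)\|_{V_\psi}$ I use \eqref{cd}, which gives $\|\gamma(u-v^h)\|_{V_\psi}\le c_\Delta\|u-v^h\|_V$. For $I_R$ the crude bound gives
\[ c_1 I_R\le c\left(1+\|u\|_V+\|p\|_Q+\|f\|_{V^*}\right)\|u-v^h\|_V + c\left|\Phi(u,v^h)-\Phi(u,u)\right|, \]
where the constant in front of $\|u-v^h\|_V$ is fixed because $(u,p)$ is the fixed exact solution. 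The last term tends to zero as $v^h\to u$ because $\Phi(u,\cdot)$ is continuous on $V$; this continuity was noted after $H(\Phi)_2$, since $\Phi(u,\cdot)$ is convex and bounded above on an open set.

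Next I use the density hypotheses. Since $u\in K_V$ and $p\in K_Q$, density gives $v^h\in K_V^h$ with $\|u-v^h\|_V\to0$ and $q^h\in K_Q^h$ with $\|p-q^h\|_Q\to0$ as $h\to0$. Here I interpret density in the usual sense for a family of nested or refining spaces, namely that $\inf_{v^h\in K_V^h}\|u-v^h\|_V\to 0$ as $h\to 0$, and likewise for $K_Q^h$. Inserting these choices, every term on the right of \eqref{bd:1} tends to zero. Hence $\|u-u^h\|_V^2+\|p-p^h\|_Q^2\to0$, which gives the claim.

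The main obstacle is the uniformity of the constants $c_1$ and $c$ in \eqref{bd:1} with respect to $h$. These constants depend only on $M_A$, $m_A$, $\alpha_\Phi$, $\alpha_\psi c_\Delta^2$, $M_b$, $c_\psi$, $\epsilon$ and the $h$-independent BB constant $\alpha_b$ from \eqref{BB}, so they are uniform in $h$. One point needs care: the bound on $I_\psi$ contains $\|\gamma u^h\|_{V_\psi}$. In the derivation this was already absorbed through $\|\gamma(u-u^h)\|_{V_\psi}$ and an $\epsilon$-term, so no a priori bound on $u^h$ is required.
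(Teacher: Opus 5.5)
Your proposal is correct and matches the paper's brief argument: choose $v^h\to u$ and $q^h\to p$ in the error bound \eqref{bd:1}, control $I_R$ by the crude bound together with the continuity of $\Phi(u,\cdot)$, and control $\|\gamma(u-v^h)\|_{V_\psi}$ through \eqref{cd}. You also state the density hypothesis explicitly as $\inf_{v^h\in K_V^h}\|u-v^h\|_V\to0$, and likewise for $K_Q^h$; density of the union alone yields this only for nested or refining families, so making it explicit is a sensible clarification the paper leaves implicit.
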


\section{A Stokes variational-hemivariational inequality}\label{sec:Stokes}

In this section, we consider a variational-hemivariational inequality of the Stokes equations for an
incompressible fluid flow occupying the domain $\Omega$ subject to non-leaking slip boundary conditions 
of friction types on parts of the boundary.  Let $\Omega\subset\mathbb{R}^d$, $d\le 3$, be an open 
bounded Lipschitz domain.  Its boundary $\Gamma=\partial\Omega$ is split into three parts: $\Gamma_D$, 
$\Gamma_{S,1}$ and $\Gamma_{S,2}$. The unit outward normal vector exists a.e.\ on $\Gamma$ and is 
denoted by $\bnu=(\nu_1,\ldots,\nu_d)^T$.  For a vector $\bv$, its normal component and tangential
component on $\Gamma$ are $v_\nu=\bv\cdot\bnu$ and $\bv_\tau=\bv-v_\nu \bnu$.

Let $\mu>0$ be the viscosity of the fluid, and denote by $\bu$ and $p$
for the unknown velocity and pressure fields of the fluid.  Denote 
\[ \bvarepsilon(\bu)=(\nabla\bu+(\nabla\bu)^T)/2\]
for the strain rate tensor and 
\[ \bsigma=2\,\mu\,\bvarepsilon(\bu)-p\,\bI\]
for the stress tensor, $\bI$ 
being the identity tensor.  The normal component and the tangential component of the stress on the 
boundary are $\sigma_\nu=(\bsigma\bnu)\cdot\bnu$ and $\bsigma_\tau=\bsigma\bnu-\sigma_\nu\bnu$.

Given a source density function $\fb\colon \Omega\to\mathbb{R}^d$, a convex function 
$\phi\colon \Gamma_{S,1}\to\mathbb{R}$, and a non-convex function $\psi\colon \Gamma_{S,2}\to \mathbb{R}$,
the pointwise formulation of the problem is the following.

\begin{problem}\label{p:S1}
Find a velocity field $\bu\colon \Omega\to\mathbb{R}^d$ and a pressure field $p\colon \Omega\to\mathbb{R}$ such that
\begin{align}
& {}-{\rm div}(2\,\mu\,\bvarepsilon(\bu))+\nabla p=\fb\quad{\rm in}\ \Omega, \label{S1}\\
& {\rm div}\bu=0\quad{\rm in}\ \Omega, \label{S2}\\
& \bu=\bzero\quad{\rm on}\ \Gamma_D, \label{S3}\\
& u_\nu=0,\ -\bsigma_\tau\in\partial\phi(\bu_\tau)\quad{\rm on}\ \Gamma_{S,1}, \label{S4}\\ 
& u_\nu=0,\ -\bsigma_\tau\in\partial\psi(\bu_\tau)\quad{\rm on}\ \Gamma_{S,2}. \label{S5}
\end{align}
\end{problem}

Problem \ref{p:S1} will be studied through its formulation.  For this purpose, we introduce the following function spaces:
\begin{align}
& V=\left\{\bv\in H^1(\Omega)^d\mid \bv=\bzero\ {\rm on}\ \Gamma_D,\, v_\nu=0\ {\rm on}\ \Gamma_S\right\},
\label{Sp:V}\\
& V_{\rm div}=\left\{\bv\in V\mid {\rm div}\bv=0\ {\rm in}\ \Omega\right\},\label{Sp:Vdiv}\\
& V_0=H^1_0(\Omega)^d,\label{Sp:V0}\\
& Q=\left\{q\in L^2(\Omega)\mid (q,1)_\Omega=0\right\}, \label{Sp:Q}
\end{align}
where $\Gamma_S=\Gamma_{S,1}\cup\Gamma_{S,2}$.  
Over the space $V$, we use the norm $\|\bv\|_V=\|\bvarepsilon(\bv)\|_{L^2(\Omega)^{d\times d}}$, 
which is equivalent to the standard $H^1(\Omega)^d$ norm over $V$.

The reduced weak formulation of Problem \ref{p:S1} is the following.

\begin{problem}\label{p:S2}
Find $\bu\in V_{\rm div}$ such that
\begin{align}
&\int_\Omega 2\,\mu\,\bvarepsilon(\bu):\bvarepsilon(\bv-\bu)\,dx
 +\int_{\Gamma_{S,1}}\left[\phi(\bv_\tau)-\phi(\bu_\tau)\right] ds
 +\int_{\Gamma_{S,2}} \psi^0(\bu_\tau;\bv_\tau-\bu_\tau)\,ds \nonumber\\
& {}\qquad \ge (\fb,\bv-\bu)\quad \forall\,\bv\in V_{\rm div}.
\label{S10}
\end{align} 
\end{problem}

The full weak formulation of Problem \ref{p:S1} that includes the pressure unknown $p$, is the following.

\begin{problem}\label{p:S3}
Find $(\bu,p)\in V\times Q$ such that
\begin{align}
&\int_\Omega 2\,\mu\,\bvarepsilon(\bu):\bvarepsilon(\bv-\bu)\,dx -\int_\Omega p\,{\rm div}(\bv-\bu)\,dx
+ \int_{\Gamma_{S,1}}\left[\phi(\bv_\tau)-\phi(\bu_\tau)\right] ds \nonumber\\
&{}\qquad +\int_{\Gamma_{S,2}} \psi^0(\bu_\tau;\bv_\tau-\bu_\tau)\,ds\ge (\fb,\bv-\bu)\quad \forall\,\bv\in V,
\label{S11}\\
& \int_\Omega q\,{\rm div}\bu\,dx=0\quad\forall\,q\in Q.
\label{S12}
\end{align}
\end{problem}

We will apply the theoretical results in the last two sections to study Problem \ref{p:S3} and its 
numerical approximation.  Let $K_V=V$, $K_Q=Q$, and define for $\bu,\bv\in V$ and $q\in Q$,
\begin{align*}
& a(\bu,\bv) = \int_\Omega 2\,\mu\,\bvarepsilon(\bu):\bvarepsilon(\bv)\,dx,\\
& b(\bv,q) = -\int_\Omega q\,{\rm div}\bv\,dx,\\
& \Phi(\bv) = \int_{\Gamma_{S,1}}\phi(\bv_\tau)\,ds,\\
& (\fb,\bv) = \int_\Omega \fb\cdot\bv\,dx.
\end{align*}
It is well known that the inf-sup condition \eqref{infsup} is satisfied.

For the functions $\phi$ and $\psi$, we introduce the following assumptions.

\begin{itemize}
\item $H(\phi)$ \ $\phi\colon \mathbb{R}^d\to\mathbb{R}$ is convex and Lipschitz continuous.
\item $H(\psi)$ \ $\psi\colon \mathbb{R}^d\to\mathbb{R}$ is locally Lipschitz continuous and for some 
constants $c_\psi\ge 0$ and $\alpha_\psi\ge 0$,
\begin{align}
& |\partial \psi(\bz)|_{\mathbb{R}^d} \le c_\psi\left(1+|\bz|_{\mathbb{R}^d} \right)\quad\forall\,\bz\in\mathbb{R}^d, \label{psi3}\\
& \psi^0(\bz_1;\bz_2-\bz_1) + \psi^0(\bz_2;\bz_1-\bz_2)\le \alpha_\psi |\bz_1-\bz_2|^2_{\mathbb{R}^d}
\quad\forall\,\bz_1,\bz_2\in\mathbb{R}^d. \label{psi4}
\end{align}
\end{itemize}

\begin{theorem}\label{thm:ex1}
Assume $H(\phi)$, $H(\psi)$, and $\alpha_\psi<2\,\mu\,\lambda_0$, where $\lambda_0>0$ is the smallest eigenvalue of the eigenvalue problem
\[ \bu\in V,\quad \int_\Omega 2\,\mu\,\bvarepsilon(\bu):\bvarepsilon(\bv)\,dx=\lambda \int_{\Gamma_{S,2}}\bu_\tau{\cdot}\bv_\tau \,ds \quad \forall\,\bv\in V. \]
Then for any $\fb\in L^2(\Omega)^d$, both Problem \ref{p:S2} and Problem \ref{p:S3} admit a unique solution and the two problems are 
equivalent in the sense that if $\bu$ is a solution of Problem \ref{p:S2}, then there exists $p$ such that
$(\bu,p)$ solves Problem \ref{p:S3}, and conversely, if $(\bu,p)$ is a solution of Problem \ref{p:S3}, then $\bu$ solves Problem \ref{p:S2}.
\end{theorem}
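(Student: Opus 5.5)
Problem \ref{p:S3} is an instance of Problem \ref{p1v} with $K_V=V$, $K_Q=Q$, $\langle A\bu,\bv\rangle=a(\bu,\bv)$, $\Delta=\Gamma_{S,2}$, $\gamma\bv=\bv_\tau$ and $m=d$. The plan is to verify the hypotheses of Theorem \ref{thm:main1v} to get existence and uniqueness for Problem \ref{p:S3}, and then to prove the equivalence with Problem \ref{p:S2} directly.

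\emph{Hypotheses.} Take $V_0=H^1_0(\Omega)^d\subset V$; then $H(K_V)$ and $H(K_Q)$ are immediate. The form $a$ is bilinear, bounded and coercive. With $\|\bv\|_V=\|\bvarepsilon(\bv)\|$ we get $m_A=M_A=2\mu$, so $H(A)$ holds. $H(b)$ follows from the classical inf-sup condition of $-{\rm div}$ from $H^1_0(\Omega)^d$ onto the zero-mean functions in $L^2(\Omega)$. For $H(\Phi)_1$: $\Phi$ is convex, and it is Lipschitz on $V$ by $H(\phi)$ and the trace theorem. Since $\bw\in V_0$ has zero trace, $\Phi(\bv+\bw)=\Phi(\bv)$. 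For $H(\psi)$, the growth condition \eqref{psi3} and the relaxed monotonicity \eqref{psi4} are assumed. For the part of \eqref{Psi} used in the abstract proof, note that $\Psi^0(\bv;\bw)\le I_\Delta(\psi^0(\bv_\tau;\bw_\tau))=I_\Delta(\psi^0(\bv_\tau;\bzero))=0$ for $\bw\in V_0$. Applying the same bound with $-\bw$ gives the equation \eqref{n23} in the proof. Finally, $c_\Delta^2$ is the best constant in $\int_{\Gamma_{S,2}}|\bv_\tau|^2ds\le c_\Delta^2\|\bvarepsilon(\bv)\|^2$. By the Rayleigh quotient of the stated eigenvalue problem, $c_\Delta^2=2\mu/\lambda_0\cdot\frac1{2\mu}\cdot$ (appropriately normalized), and one checks that $\alpha_\psi c_\Delta^2<m_A$ is exactly $\alpha_\psi<2\mu\lambda_0$. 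This normalization is the first thing to check carefully: $\lambda_0=\inf a(\bv,\bv)/\|\bv_\tau\|^2_{L^2(\Gamma_{S,2})}$, so $\alpha_\psi\|\bv_\tau\|^2\le(\alpha_\psi/\lambda_0)a(\bv,\bv)$, and the smallness condition should be read in terms of $a$. Theorem \ref{thm:main1v} then yields a unique $(\bu,p)$.

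\emph{Equivalence.} Suppose $(\bu,p)$ solves Problem \ref{p:S3}. Then \eqref{S12} with $q={\rm div}\bu-$(its mean) gives ${\rm div}\bu=0$, because the mean of ${\rm div}\bu$ is $\int_\Gamma u_\nu\,ds=0$. Restricting \eqref{S11} to $\bv\in V_{\rm div}$ kills the pressure term, so $\bu$ solves Problem \ref{p:S2}. Uniqueness for Problem \ref{p:S2} follows from the standard argument: take two solutions, test each with the other, add, and use \eqref{psi4} together with the coercivity condition. Existence for Problem \ref{p:S2} then follows from the projection just described. Conversely, let $\bu$ solve Problem \ref{p:S2} and let $(\tilde\bu,p)$ be the solution of Problem \ref{p:S3}. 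Then $\tilde\bu$ solves Problem \ref{p:S2}, so $\tilde\bu=\bu$ by uniqueness, and $(\bu,p)$ solves Problem \ref{p:S3}.

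The main obstacle is this converse direction. Recovering $p$ directly from a solution of Problem \ref{p:S2} would be delicate, since a de Rham–type argument for inequalities is not available. Using the uniqueness of the Problem \ref{p:S2} solution avoids it entirely. A secondary point is the precise constant relation between $c_\Delta$ and $\lambda_0$, and I would double-check that it matches the factor $2\mu$ in the hypothesis.
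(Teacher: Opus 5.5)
Your argument has the same structure as the paper's proof. You obtain well-posedness of Problem~\ref{p:S3} from the abstract theory, show ${\rm div}\,\bu=0$ and restrict \eqref{S11} to $V_{\rm div}$, and get the converse from uniqueness for Problem~\ref{p:S2}. The differences are minor and in your favour:
\begin{itemize}
\item You invoke Theorem~\ref{thm:main1v}, which matches the rank-$(1,1)$ structure directly; the paper cites Theorem~\ref{thm:main3v}.
\item You check two things the paper skips: that ${\rm div}\,\bu$ has zero mean, and that \eqref{Psi} holds because $\bv_\tau=\bzero$ for $\bv\in V_0$.
\item You prove existence and uniqueness for Problem~\ref{p:S2} directly, by projecting the mixed solution and running the two-solution argument. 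The paper instead quotes an external well-posedness theorem from Han's monograph.
\end{itemize}

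The one step that is wrong as written is the identification of the smallness condition. With the eigenvalue problem as stated, $\lambda_0=\inf a(\bv,\bv)/\|\bv_\tau\|^2_{L^2(\Gamma_{S,2})^d}$. With $\|\bv\|_V=\|\bvarepsilon(\bv)\|$ you have $m_A=2\mu$ and
\[ \|\bv_\tau\|^2_{L^2(\Gamma_{S,2})^d}\le\lambda_0^{-1}a(\bv,\bv)=\frac{2\mu}{\lambda_0}\,\|\bv\|_V^2 . \]
Hence $c_\Delta^2=2\mu/\lambda_0$, and $\alpha_\psi c_\Delta^2<m_A$ is equivalent to $\alpha_\psi<\lambda_0$, not to $\alpha_\psi<2\mu\lambda_0$. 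No renormalization of $V$ rescues this. In any norm, $m_A\|\bv\|_V^2\le a(\bv,\bv)$ gives $\lambda_0\ge m_A/c_\Delta^2$, so $\alpha_\psi c_\Delta^2<m_A$ always forces $\alpha_\psi<\lambda_0$. Your own uniqueness argument for Problem~\ref{p:S2} likewise ends with $a(\bv,\bv)\le(\alpha_\psi/\lambda_0)\,a(\bv,\bv)$ for $\bv=\bu_1-\bu_2$.

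The paper's proof contains the same slip: it asserts $c_\Delta^2=\lambda_0^{-1}$. That value, and the printed hypothesis $\alpha_\psi<2\mu\lambda_0$, are correct only if $\lambda_0$ is defined through $\int_\Omega\bvarepsilon(\bu):\bvarepsilon(\bv)\,dx$ without the factor $2\mu$.

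With the factor present, the hypothesis is genuinely too weak for large $\mu$. Take $\phi\equiv0$, $\fb=\bzero$ and $\psi(\bz)=-\frac{\alpha}{2}|\bz|^2$, so that $\alpha_\psi=\alpha$. Choose $\alpha$ equal to the minimum of the same Rayleigh quotient over $V_{\rm div}$. Then the minimizer is a second, nonzero solution of Problem~\ref{p:S2} besides $\bzero$. This $\alpha$ lies below $2\mu\lambda_0$ once $\mu$ is large, because both eigenvalues scale linearly in $\mu$.

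So replace the ``exactly'' claim, and the garbled formula for $c_\Delta^2$, with an explicit choice: either assume $\alpha_\psi<\lambda_0$, or drop $2\mu$ from the eigenvalue problem. With that correction the rest of your proof goes through.
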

\begin{proof}
First, we apply Theorem \ref{thm:main3v} to know that Problem \ref{p:S3} has a unique solution.  Note that 
$c_\Delta^2=\lambda_0^{-1}$.   
By \cite[Theorem 5.9]{Han2024}, we know Problem \ref{p:S2} has a unique solution.

Suppose $(\bu,p)\in V\times Q$ is the unique solution of Problem \ref{p:S3}.  Then we take $q={\rm div}\bu\in Q$ in \eqref{S12} to see that $\bu\in V_{\rm div}$.  By restricting $\bv$ to $V_{\rm div}$, we derive \eqref{S10} from \eqref{S11}.

Conversely, suppose $\bu$ is the unique solution of Problem \ref{p:S2}.  Then from the previous paragraph, 
$\bu$ is the first component of the unique solution $(\bu,p)\in V\times Q$ to Problem \ref{p:S3}.
\end{proof}

Turn now to discussion of a finite element approximation of Problem \ref{p:S3}.  For simplicity, assume
$\Omega$ is a polygonal/polyhedral domain.  Consider a regular family of finite element partitions 
$\{{\cal T}^h\}_h$ of $\overline{\Omega}$ into triangular elements (if $d=2$) or tetrahedral elements
(if $d=3$).  We assume that each partition is compatible to the boundary splitting according to the
boundary condition types, i.e., if a side or a face of an element has a nontrivial intersection 
with $\Gamma_D$ or $\Gamma_{S,1}$ or $\Gamma_{S,2}$, then the side or the face lies in 
$\overline{\Gamma_D}$ or $\overline{\Gamma_{S,1}}$ or $\overline{\Gamma_{S,2}}$.  Let $V^h\times Q^h
\subset V\times Q$ be a stable pair of finite element spaces corresponding to the partition ${\cal T}^h$,
i.e., for a constant $\beta_b>0$, independent of $h$, such that
\begin{equation}
\beta_b \|q^h\|_Q\le\sup_{{\boldsymbol v}^h\in V_0^h} \frac{b(\bv^h,q^h)}{\|\bv^h\|_V}
\quad\forall\,q^h\in Q^h,
\label{infsuph}
\end{equation}
where 
\[ V^h_0 = V^h \cap V_0. \]
Then the finite element method for solving Problem \ref{p:S3} is the following.

\begin{problem}\label{p:S3h}
Find $(\bu^h,p^h)\in V^h\times Q^h$ such that
\begin{align}
&\int_\Omega 2\,\mu\,\bvarepsilon(\bu^h):\bvarepsilon(\bv^h-\bu^h)\,dx -\int_\Omega p^h\,{\rm div}(\bv^h-\bu^h)\,dx
+ \int_{\Gamma_{S,1}}\left[\phi(\bv^h_\tau)-\phi(\bu^h_\tau)\right] ds \nonumber\\
&{}\qquad +\int_{\Gamma_{S,2}} \psi^0(\bu^h_\tau;\bv^h_\tau-\bu^h_\tau)\,ds\ge (\fb,\bv^h-\bu^h)\quad \forall\,\bv^h\in V^h,
\label{S11h}\\
& \int_\Omega q^h\,{\rm div}\bu^h\,dx=0\quad\forall\,q^h\in Q^h.
\label{S12h}
\end{align}
\end{problem}

Under the assumptions stated in Theorem \ref{thm:ex1}, we can apply Theorem \ref{thm:main1v} to 
conclude that Problem \ref{p:S3h} has a unique solution.  The rest of the section focuses on an 
error analysis of the numerical method.  By \eqref{bd:2},
\begin{equation}
\|\bu-\bu^h\|_V^2 + \|p-p^h\|_Q^2\le c_1 I_R
 +c\left(\|\bu-\bv^h\|_V^2 +\|\bu_\tau-\bv^h_\tau\|_{L^2(\Gamma_S)^2} + \|p-q^h\|_Q^2\right),
\label{ex1:Cea1}
\end{equation}
where from \eqref{n7h},
\[ I_R = a(\bu,\bv^h-\bu)+b(\bv^h-\bu,p)+I_{\Gamma_{S,1}}(\phi(\bv^h_\tau)-\phi(\bu_\tau))
+I_{\Gamma_{S,2}}(\psi^0(\bu_\tau;\bv^h_\tau-\bu_\tau))-(\fb,\bv^h-\bu). \]

Let $\{\Gamma_i\}_{1\le i\le i_0}$ be the plat components of $\Gamma_S=\Gamma_{S,1}\cup\Gamma_{S,2}$.
Assume the solution regularities:
\begin{equation}
\bu\in H^2(\Omega)^d, \quad \bu_\tau|_{\Gamma_i}\in H^2(\Gamma_i)^d,\ 1\le i\le i_0,
\quad p\in H^1(\Omega).
\label{reg1}
\end{equation}
As in the proof of Theorem 4.5 in \cite{FCHCD20}, it can be shown that the solution $(\bu,p)\in V\times Q$ of Problem \ref{p:S3} satisfies the 
following pointwise relations:
\begin{align}
& {}-{\rm div}(2\,\mu\,\bvarepsilon(\bu))+\nabla p=\fb\quad{\rm a.e.\ in}\ \Omega, \label{S1p}\\
& {\rm div}\bu=0\quad{\rm a.e.\ in}\ \Omega. \label{S2p}
\end{align}
Also, note the boundary values from $\bu\in V$ and the definition of the space $V$:
\begin{align}
& \bu=\bzero\quad{\rm a.e.\ on}\ \Gamma_D, \label{S3p}\\
& u_\nu=0\quad{\rm a.e.\ on}\ \Gamma_{S,1}\cup\Gamma_{S,2}. \label{S5p}
\end{align}
Performing an integration by parts and using the above pointwise relations, we have 
\begin{align*}
I_R & = \int_\Gamma \bsigma\bnu\cdot(\bv^h-\bu)\,ds
+\int_\Omega\left[ -{\rm div}(2\,\mu\,\bvarepsilon(\bu))+\nabla p-\fb\right] dx\\
&\quad{} +I_{\Gamma_{S,1}}(\phi(\bv^h_\tau)-\phi(\bu_\tau))
+I_{\Gamma_{S,2}}(\psi^0(\bu_\tau;\bv^h_\tau-\bu_\tau))\\
&= \int_{\Gamma_S} \bsigma_\tau\cdot(\bv^h_\tau-\bu_\tau)\,ds
+I_{\Gamma_{S,1}}(\phi(\bv^h_\tau)-\phi(\bu_\tau))
+I_{\Gamma_{S,2}}(\psi^0(\bu_\tau;\bv^h_\tau-\bu_\tau)).
\end{align*}
Then,
\[ I_R \le c\,\|\bu_\tau-\bv^h_\tau\|_{L^2(\Gamma_S)^d}.\]
In conclusion, we derive from \eqref{ex1:Cea1} that
\begin{equation}
\|\bu-\bu^h\|_V^2 + \|p-p^h\|_Q^2\le 
c\left(\|\bu-\bv^h\|_V^2 +\|\bu_\tau-\bv^h_\tau\|_{L^2(\Gamma_S)^2} + \|p-q^h\|_Q^2\right).
\label{ex1:Cea2}
\end{equation}

As an example, if we use the P1b/P1 element pair (\cite{ABF84}), by using the standard finite element 
approximation theory (cf.\ \cite{BS2008, Ci1978, EG2021I}), under the solution regularity assumptions \eqref{reg1},
we obtain from \eqref{ex1:Cea2} the optimal-order error estimate
\begin{equation}
\|\bu-\bu^h\|_V^2 + \|p-p^h\|_Q^2\le c\,h^2.
\label{ex1:Cea3}
\end{equation}

\section{Numerical examples}\label{sec:ex}

In the numerical examples, we let
$$\phi(\bv_\tau) = g \, |\bv_\tau|,$$
where $g$ is a positive constant, and let
\[\psi(\bz)=\int_{0}^{|{\boldsymbol z}|} \omega(t)\, dt,\  \bz \in \mathbb{R}^{d}, \quad \omega(t)=(a-b) e^{-\beta t}+b,\]
where $a, b, \beta$ are all positive constants, and $a>b$. Since $\omega$ is decreasing on $[0,+\infty), \psi$ is
not convex. The slip boundary condition on $\Gamma_{S,2}$ is equivalent to
\[\left|\bsigma_{\tau}\right| \leq \omega(0) \text { if } \bu_{\tau}=\bzero, \quad
-\bsigma_{\tau}=\omega\left(\left|\bu_{\tau}\right|\right) \frac{\bu_{\tau}}{\left|\bu_{\tau}\right|} \text { if } \bu_{\tau} \neq \bzero .
\]

Define two Lagrangian multipliers $\blambda_1 = -\bsigma_{\tau} / g$, 
$\blambda_2=-\bsigma_{\tau} / \omega\left(\left|\bu_{\tau}\right|\right)$, elements of the sets
\begin{align*}
\Lambda_1 & =\left\{\blambda_1 \in L^2\left(\Gamma_{S,1}; \mathbb{R}^{d}\right)\mid |\blambda_1| \le 1 \ {\rm a.e.\ on}\ \Gamma_{S,1}\right\},\\
\Lambda_2 & =\left\{\blambda_2\in L^2\left(\Gamma_{S,2}; \mathbb{R}^{d}\right)\mid |\blambda_2|\leq 1 \
{\rm a.e.\  on}\ \Gamma_{S,2}\right\}, 
\end{align*}
respectively, and introduce the following form of the weak formulation of the problem.

\begin{problem}\label{prob6.1}
Find $(\bu, p, \blambda_1, \blambda_2) \in \mathcal{W} \times \mathcal{Q}\times \Lambda_1\times \Lambda_2$ such that
\begin{align*}
& a(\bu, \bv)-b(\bv, p)+\int_{\Gamma_{S,1}} g\,\blambda_1 \cdot \bv_\tau ds
+\int_{\Gamma_{S,2}} \omega\left(\left|\bu_{\tau}\right|\right) \blambda_2 \cdot \bv_{\tau} ds
=\langle\fb, \bv\rangle \quad\forall\,\bv \in \bV, \\
& b(\bu, q)=0 \quad\forall\, q \in Q , \\
&\blambda_1 \cdot \bu_{\tau}=\left|\bu_{\tau}\right|\quad {\rm a.e.\ on}\ \Gamma_{S,1},\\
&\blambda_2 \cdot \bu_{\tau}=\left|\bu_{\tau}\right|\quad {\rm a.e.\ on}\ \Gamma_{S,2}.\\
\end{align*}
\end{problem}

We use the P1b/P1 finite element pair for the spatial discretization. It is known that the discrete 
inf-sup condition is satisfied (\cite{ABF84}).  Let $\boldsymbol{P}: \mathbb{R}^{d} \rightarrow
\overline{B(\bzero;1)}$ be the orthogonal projection operator onto the closed unit ball in $\mathbb{R}^d$.
We implement the following Uzawa algorithm for the numerical solution of Problem \ref{prob6.1}.

\emph{Initialization}. Choose a parameter $\rho>0$, a maximal iteration number $l_{0}$ and error tolerance $\varepsilon>0$, 
let the approximate initial velocity $\bu_0^h=\bzero$, and set $\blambda_{1,0}^h=\blambda_{2,0}^h=\bzero$.

\emph{Iteration}. For $l \geq 1$, find $\left(\bu_l^h,p_l^h\right) \in \bV^{h} \times Q^{h}$ such that
\begin{align*}
& a\left(\bu_{l}^{h}, \bv^{h}\right)-b\left(\bv^{h}, p_{l}^{h}\right) \\
& \qquad=\left\langle\fb, \bv^{h}\right\rangle-\int_{\Gamma_{S,1}} g\,\blambda_{1,l-1}^h \cdot \bv^h_\tau ds
-\int_{\Gamma_{S,2}} \omega\left(\left|\bu_{l-1,\tau}^{h}\right|\right)\blambda_{2,l-1}^{h}\cdot\bv_{\tau}^{h} ds 
\quad \forall\,\bv^{h} \in \bV^{h}, \\
& b\left(\bu_{l}^{h}, q^{h}\right)=0 \quad \forall\, q^{h} \in Q^{h}
\end{align*}
and update
\begin{align*}
&\blambda_{1,l}^{h}=\boldsymbol{P}\left(\blambda_{1,l-1}^{h}+\rho\bu_{l,\tau}^{h}\right)\quad{\rm on}\ \Gamma_{S,1},\\
&\blambda_{2,l}^{h}=\boldsymbol{P}\left(\blambda_{2,l-1}^{h}+\rho\bu_{l,\tau}^{h}\right)\quad{\rm on}\ \Gamma_{S,2}
\end{align*}
until $l=l_0$ or $\left\|\bu_l^h-\bu_{l-1}^h\right\|_{L^2(\Omega)^d}/\left\|\bu_l^h\right\|_{L^2(\Omega)^d}<\varepsilon$.

The algorithm is implemented in Python with the FEniCS package (\cite{LL2016}). In the numerical examples, we let $\mu=1$ for the viscosity coefficient,
$d=2$ for the spatial dimension, $\rho=10$. For the function $\omega(t)$, use $\beta=1$, $a=0.35$, $b=0.25$. 
For the function $\phi(t)$, use $g=0.2$. The source function is defined by
\begin{align*}
    \fb(x,y) = 2\left(
    \begin{aligned}
    -10 (1 - 6x + 6x^2) y(1 - y)(1 - 2y) + 30x^2(1 - x)^2(1 - 2y) - (1 - 2y) \\
    10 (1 - 6y + 6y^2) x(1 - x)(1 - 2x) - 30y^2(1 - y)^2(1 - 2x) - (1 - 2x)
    \end{aligned}
    \right).
\end{align*}
For the stopping criteria, let $l_{0}=200$ and $\varepsilon=10^{-6}$. 

\medskip
\noindent{\bf Example \ref{sec:ex}.1}.  In the first example, we use uniform triangulations of the 
domain $\Omega=(0,1) \times(0,1)$. The unit interval $[0,1]$ is split into $1/h$ ($h=1/8$, $1/16$, $\cdots$) equal length sub-intervals. 
The triangulation of the unit square with $h = 1/16$ is shown in Figure~\ref{Mesh_EX1}. Its boundary 
is split into three parts $\Gamma_D$, $\Gamma_{S,1}$, and $\Gamma_{S,2}$; $\Gamma_D$ is the union of the left
and right sides, $\Gamma_{S,1}$ is the top side, and $\Gamma_{S,2}$ is the bottom side.

\begin{figure}
    \centering
    \includegraphics[width=0.5\linewidth]{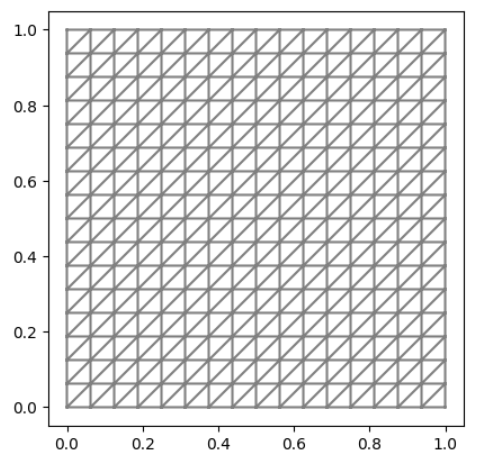}
    \caption{Triangulation for Example \ref{sec:ex}.1}
    \label{Mesh_EX1}
\end{figure}

To calculate numerical errors, we use the numerical solution with $h=1/256$ as the reference solution 
$(\bu^*,\blambda_1^*,\blambda_2^*)$. We report numerical convergence orders of
$\bu^{h}-\bu^*$ in the $(L^2(\Omega))^2$-norm and the $(H^1(\Omega))^2$-norm, that of $p^{h}-p^*$ in the 
$L^2(\Omega)$-norm, that of $\blambda_1^{h}-\blambda_1^*$ in the $(L^2(\Gamma_{S,1}))^2$-norm, and that of 
$\blambda_2^{h}-\blambda_2^*$ in the $(L^2(\Gamma_{S,2}))^2$-norm.

Tables \ref{EX1_table1} and \ref{EX1_table2} show numerical convergence orders with respect to $h$. We also report the number of iterations required for the Uzawa algorithm.
Values of the $(L^2(\Omega))^2$-norm and $(H^1(\Omega))^2$-norm of $\bu^*$, the $L^2(\Omega)$-norm of $p^*$, 
the $(L^2(\Gamma_{S,1}))^2$-norm of $\blambda_1^*$, and the $(L^2(\Gamma_{S,2}))^2$-norm of $\blambda_2^*$ 
for the reference solution are
\begin{align*}
& \| \bu^* \|_{L^2(\Omega)^2} \doteq 4.619\times 10^{-2},\quad
   \| \bu^* \|_{H^1(\Omega)^2}  \doteq 3.047\times 10^{-1},\quad
   \|  p^* \|_{L^2(\Omega)}  \doteq 2.340\times 10^{-1},\\
&   \| \blambda_1^* \|_{L^2(\Gamma_{S,1})^2} \doteq 9.261\times 10^{-1},\quad
   \| \blambda_2^* \|_{L^2(\Gamma_{S,2})^2} \doteq 8.533\times 10^{-1}.
\end{align*}

Numerical results for $h=1/16$ are shown in Figures~\ref{u_E1}, \ref{p_E1} and \ref{VelocityMag_E1}. 
To illustrate the friction effect, in Figures \ref{u_tau1_E1}, \ref{sigma_tau1_E1}, \ref{u_tau2_E1}
and \ref{sigma_tau2_E1}, we show $\bu_\tau$ and $\bsigma_\tau$ along the slip boundary $\Gamma_{S,1}$, and 
$\Gamma_{S,2}$ for the reference solution, respectively. The top and bottom sides of the domain are straight 
and parallel to the $x$-axis. Therefore, the tangential velocity $\bu_\tau$ on the side can be equated with
its first component, and similarly for the friction force $\bsigma_\tau$.

\begin{table}[h!]
\centering
   \caption{Numerical convergence orders of $\bu$ and $p$ for Example \ref{sec:ex}.1}
\begin{tabular}{c c c c c c c} 
\toprule
 $h$ & $\|\bu^h-\bu^*\|_{L^2}$ & Order & $\|\bu^h-\bu^*\|_{H^1}$ & Order & $\|p^h-p^*\|_{L^2}$ & Order\\ 
\midrule
 1/8  & 6.505e-03 & -    & 9.472e-02 & -    & 7.661e-02 & -    \\ 
 1/16 & 1.860e-03 & 1.81 & 4.750e-02 & 1.00 & 2.774e-02 & 1.47 \\
 1/32 & 4.877e-04 & 1.93 & 2.342e-02 & 1.02 & 8.712e-03 & 1.67 \\
 1/64 & 1.191e-04 & 2.03 & 1.149e-02 & 1.03 & 2.568e-03 & 1.76 \\
\midrule
\end{tabular}
\label{EX1_table1}
\end{table}

\begin{table}[h!]
\centering
   \caption{Numerical convergence orders of $\blambda_1$ and $\blambda_2$ for Example \ref{sec:ex}.1}
\begin{tabular}{c c c c c c} 
\toprule
$h$&$\|\blambda_1^h-\blambda_1^*\|_{L^2(\Gamma_{S,1})}$&Order&$\|\blambda_1^h-\blambda_1^*\|_{L^2(\Gamma_{S,1})}$ & Order & Iterations\\ 
\midrule
 1/8  & 3.395e-01 & -    & 3.320e-01 & -    & 89\\ 
 1/16 & 2.125e-01 & 0.68 & 1.313e-01 & 1.34 & 149\\
 1/32 & 7.304e-02 & 1.54 & 4.589e-02 & 1.52 & 144\\
 1/64 & 2.489e-02 & 1.55 & 1.500e-02 & 1.61 & 159\\
\midrule
\end{tabular}
\label{EX1_table2}
\end{table}

\begin{figure}[h!]
   \begin{minipage}{0.48\textwidth}
     \centering
     \includegraphics[width=.7\linewidth]{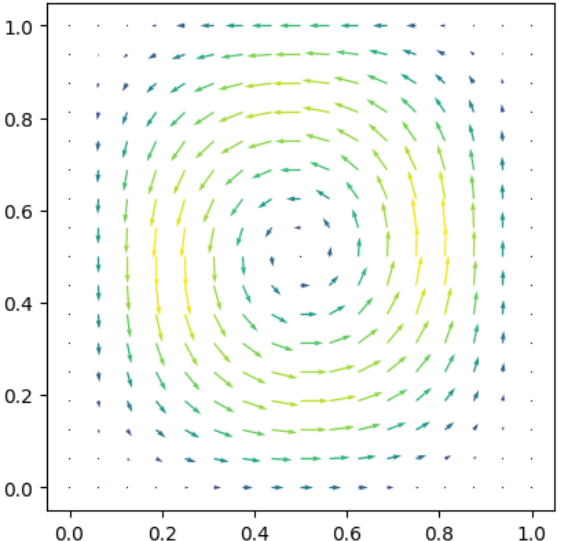}
     \caption{Velocity field for Example \ref{sec:ex}.1}
     \label{u_E1}
   \end{minipage}\hfill
   \begin{minipage}{0.48\textwidth}
     \centering
     \includegraphics[width=.7\linewidth]{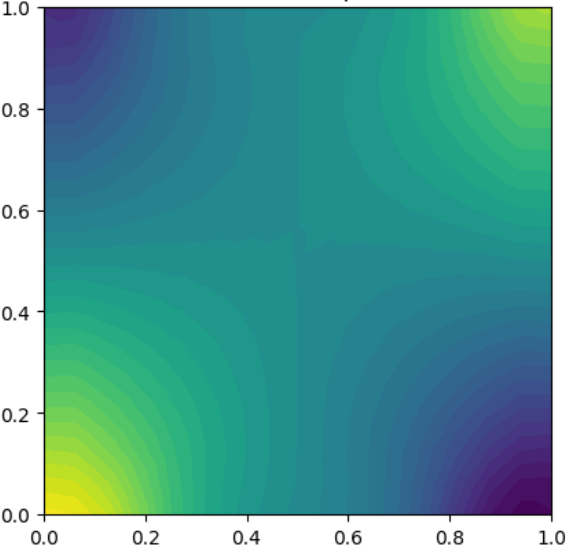}
     \caption{Pressure isobars for Example \ref{sec:ex}.1}
     \label{p_E1}
   \end{minipage}
\end{figure}

\begin{figure}
    \centering
    \includegraphics[width=0.5\linewidth]{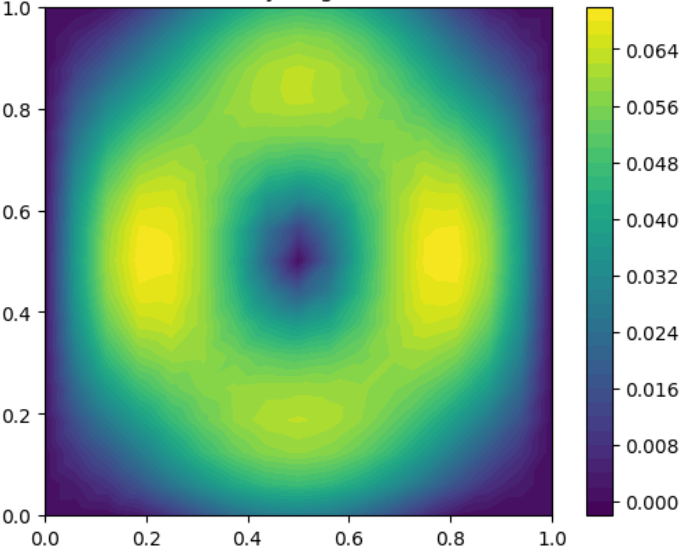}
    \caption{Magnitude of the velocity $|\bu^{h}|$ for Example \ref{sec:ex}.1}
    \label{VelocityMag_E1}
\end{figure}

\begin{figure}[h!]
   \begin{minipage}{0.48\textwidth}
     \centering
     \includegraphics[width=.7\linewidth]{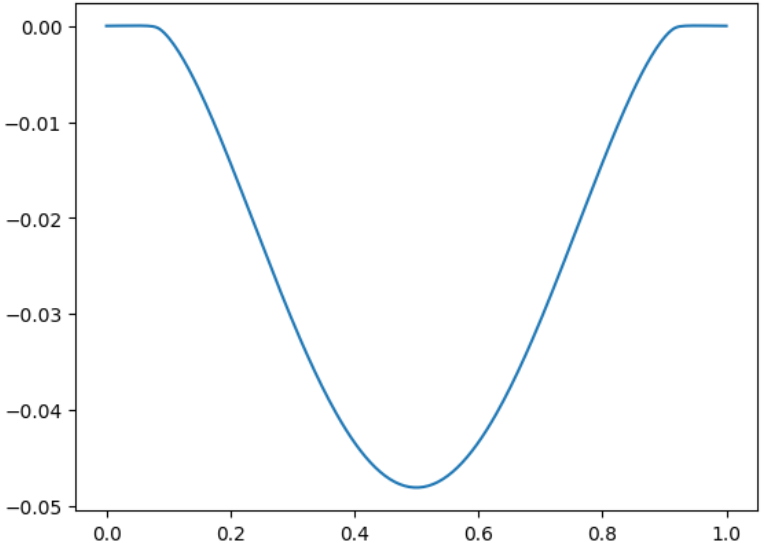}
     \caption{$ \bu_{\tau}$ along $\Gamma_{S,1}$, Example \ref{sec:ex}.1}
     \label{u_tau1_E1}
   \end{minipage}\hfill
   \begin{minipage}{0.48\textwidth}
     \centering
     \includegraphics[width=.7\linewidth]{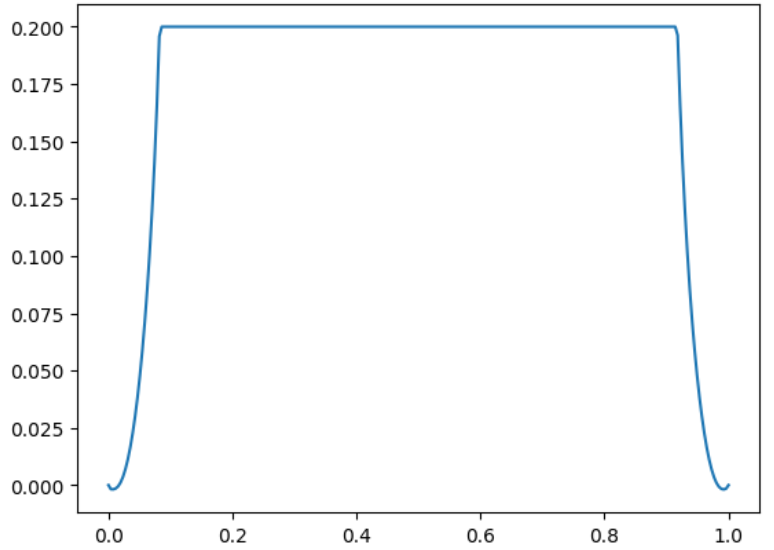}
     \caption{$ \bsigma_{\tau} $ along $\Gamma_{S,1}$, Example \ref{sec:ex}.1}
     \label{sigma_tau1_E1}
   \end{minipage}
\end{figure}

\begin{figure}[h!]
   \begin{minipage}{0.48\textwidth}
     \centering
     \includegraphics[width=.7\linewidth]{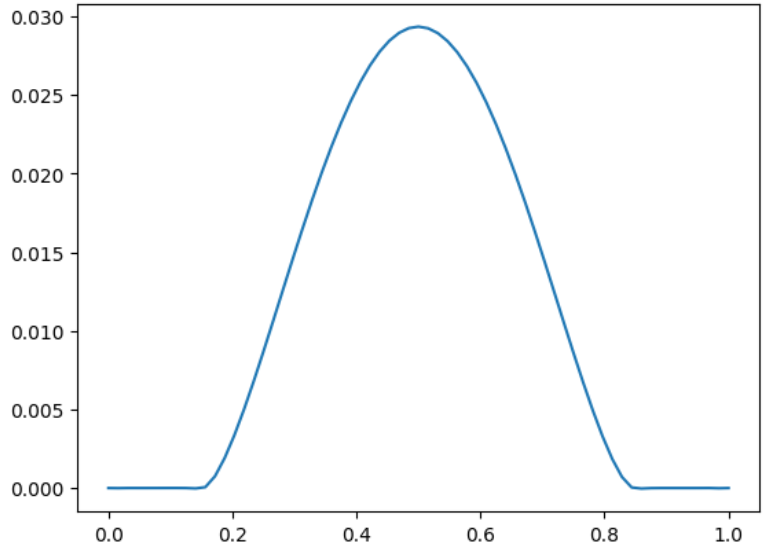}
     \caption{$ \bu_{\tau} $ along $\Gamma_{S,2}$, Example \ref{sec:ex}.1}
     \label{u_tau2_E1}
   \end{minipage}\hfill
   \begin{minipage}{0.48\textwidth}
     \centering
     \includegraphics[width=.7\linewidth]{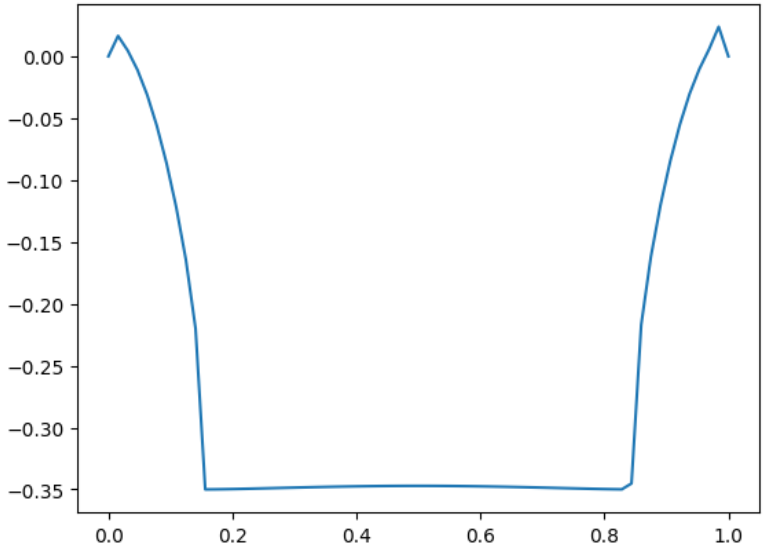}
     \caption{$ \bsigma_{\tau} $ along $\Gamma_{S,2}$, Example \ref{sec:ex}.1}
     \label{sigma_tau2_E1}
   \end{minipage}
\end{figure}

\smallskip
\noindent{\bf Example \ref{sec:ex}.2}. In the second example, the domain is a curved variant of the unit square, with the bottom side replaced by a curve defined by 
$$  y = \frac{1}{10} \sin (2 \pi x), \quad x \in [0,1],$$
We remove a disk of radius $0.2$ centered at $(0.5,0.6)^T$ from the domain and choose the circle to be $\Gamma_{S,1}$. 
Moreover, $\Gamma_D$ is the union of the top, left, and right sides, and $\Gamma_{S,2}$ is the bottom side.
We create the mesh by applying Gmsh, which is a 3D finite element mesh generator, cf.\ the website {\tt https://gmsh.info/} developed and manged by C. Geuzaine and J.-F. Remacle. In Gmsh, let the element size factor be 
$0.4$, $0.26$, $0.181$, $0.1264$, $0.06285$, and set the side length $h = 1/8$, $1/16$, $1/32$, $1/64$, $1/256$ 
on the left, top, and right boundaries. As a result, we get the total number of cells $N_{cells} = 128$, $511$, $2055$, $8187$, $131179$, respectively. The triangulations of the regions with $h = 1/16$ are shown in Figure~\ref{Mesh_EX2}.

\begin{figure}
    \centering
    \includegraphics[width=0.5\linewidth]{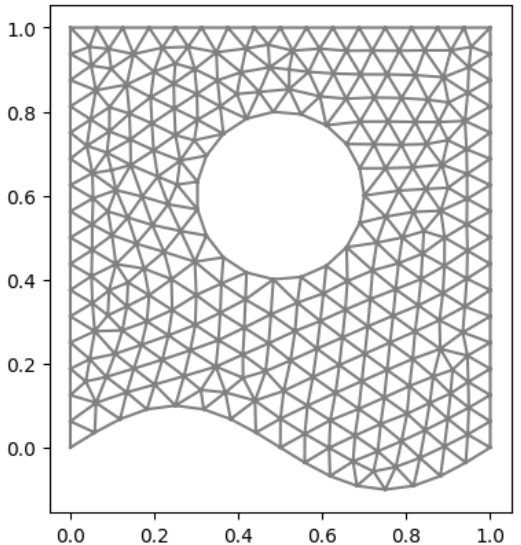}
    \caption{Triangulation for Example \ref{sec:ex}.2}
    \label{Mesh_EX2}
\end{figure}

As in Example \ref{sec:ex}.1, we use the numerical solution with $h=1/256$ as the reference solution 
$(\bu^*,\blambda_1^*,\blambda_2^*)$ in computing errors of the numerical solutions on coarse meshes. For the function 
$\omega(t)$, use $\beta=1$, $a=0.2$, $b=0.1$. Table~\ref{EX2_table1} and Table~\ref{EX2_table2} show the numerical 
convergence orders as $h$ decreases. Values of the $(L^2(\Omega))^2$-norm and $(H^1(\Omega))^2$-norm of $\bu^*$, 
the $L^2(\Omega)$-norm of $p^*$, and the $(L^2(\Gamma_S))^2$-norm of $\blambda_1^*$ and $\blambda_2^*$ for 
the reference solution:
\begin{align*}
&  \| \bu^* \|_{L^2(\Omega)^2} \doteq 2.216\times 10^{-2},\quad
   \| \bu^* \|_{H^1(\Omega)^2} \doteq 1.914\times 10^{-1},\quad
   \|  p^* \|_{L^2(\Omega)} \doteq 5.499\times 10^{-1},\\
&   \| \blambda_1^* \|_{L^2(\Gamma_{S,1})^2} \doteq 1.104,\quad
   \| \blambda_2^* \|_{L^2(\Gamma_{S,2})^2} \doteq 7.975\times 10^{-1}.
\end{align*}

\begin{table}[h!]
\centering
   \caption{Numerical convergence orders of $\bu$ and $p$ for Example \ref{sec:ex}.2}
\begin{tabular}{c c c c c c c} 
\toprule
 $h$ & $\| \bu^h - \bu^* \|_{L^2}$ & Order & $\| \bu^h - \bu^* \|_{H^1}$ & Order & $\| p^h - p^* \|_{L^2}$ & Order\\ 
\midrule
 1/8  & 5.637e-03 & -    & 8.980e-02 & -    & 1.536e-01 & -    \\ 
 1/16 & 2.053e-03 & 1.46 & 4.100e-02 & 1.13 & 5.337e-02 & 1.53 \\
 1/32 & 8.234e-04 & 1.32 & 2.017e-02 & 1.02 & 2.085e-02 & 1.36 \\
 1/64 & 3.236e-04 & 1.35 & 9.802e-03 & 1.04 & 8.486e-03 & 1.30 \\
\midrule
\end{tabular}
\label{EX2_table1}
\end{table}

\begin{table}[h!]
\centering
   \caption{Numerical convergence orders of $\blambda_1$ and $\blambda_2$ for Example \ref{sec:ex}.2}
\begin{tabular}{c c c c c c} 
\toprule
 $h$  & $\| \blambda_1^h - \blambda_1^* \|_{L^2(\Gamma_{S,1})}$ & Order &  $\| \blambda_1^h - \blambda_1^* \|_{L^
2(\Gamma_{S,1})}$ & Order & Iterations\\ 
\midrule
 1/8  & 1.090e-01 & -    & 6.712e-01 & -    & 145\\ 
 1/16 & 2.667e-02 & 2.03 & 4.032e-01 & 0.74 & 184\\
 1/32 & 1.732e-02 & 0.62 & 2.058e-01 & 0.97 & 205\\
 1/64 & 8.235e-02 & 1.07 & 1.005e-01 & 1.03 & 202\\
\midrule
\end{tabular}
\label{EX2_table2}
\end{table}

The numerical results when $h=1/16$ are shown in Figures \ref{u_E2}, \ref{p_E2}, and \ref{VelocityMag_E2}.
The magnitudes of the tangential velocity and the friction force on the slip boundary for the reference solution 
are shown in Figures \ref{u_tau1_E2}, \ref{sigma_tau1_E2}, \ref{u_tau2_E2}, and \ref{sigma_tau2_E2}.
In Figure \ref{u_tau1_E2}, the upper curve corresponds to $|\bu_\tau|$ on the top semicircular hole, while the lower curve corresponds to $|\bu_\tau|$ on the bottom semicircular hole. From Figures \ref{u_tau1_E2} and \ref{sigma_tau1_E2}, we observe that on the lower portion of the hole, for $x\in(0.495076, 0.600568)$, 
we have $|\bu_\tau| = 0$, corresponding to $|\bsigma_\tau| < 0.2$. On the remaining part of the hole, 
we have $|\bsigma_\tau| = 0.2$ and consequently $|\bu_\tau| \ne 0$.

\begin{figure}[h!]
   \begin{minipage}{0.48\textwidth}
     \centering
     \includegraphics[width=.7\linewidth]{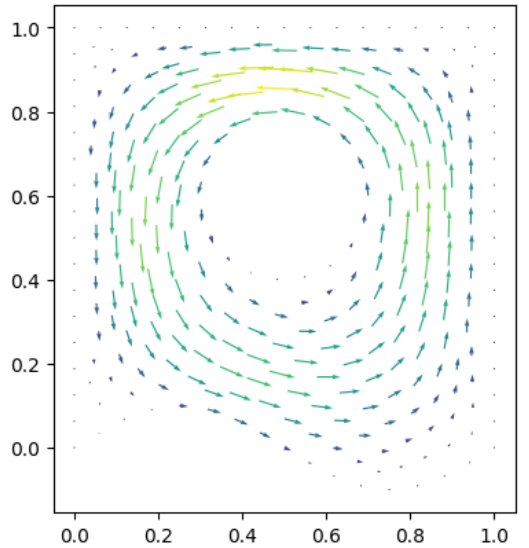}
     \caption{Velocity field for Example \ref{sec:ex}.2}
     \label{u_E2}
   \end{minipage}\hfill
   \begin{minipage}{0.48\textwidth}
     \centering
     \includegraphics[width=.7\linewidth]{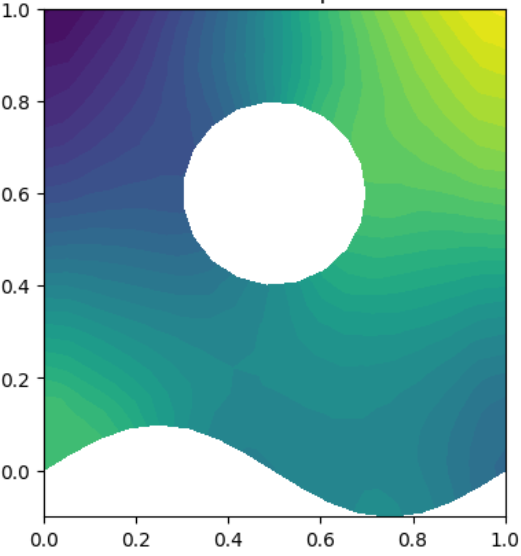}
     \caption{Pressure isobars for Example \ref{sec:ex}.2}
     \label{p_E2}
   \end{minipage}
\end{figure}

\begin{figure}
    \centering
    \includegraphics[width=0.5\linewidth]{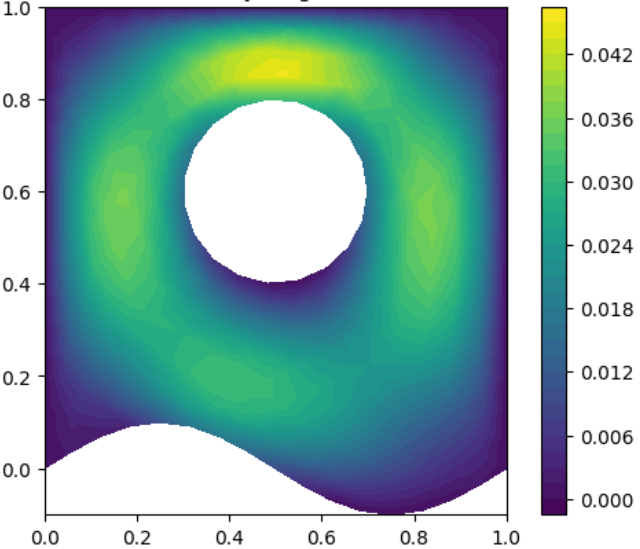}
    \caption{Magnitude of the velocity $|\bu^{h}|$ for Example \ref{sec:ex}.2}
    \label{VelocityMag_E2}
\end{figure}

\begin{figure}[h!]
   \begin{minipage}{0.48\textwidth}
     \centering
     \includegraphics[width=.7\linewidth]{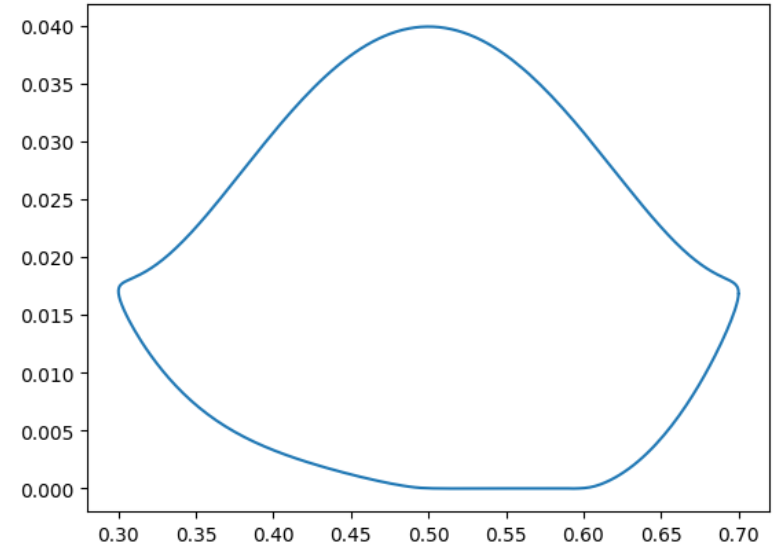}
     \caption{$ |\bu_{\tau}| $ along $\Gamma_{S,1}$, Example \ref{sec:ex}.2}
     \label{u_tau1_E2}
   \end{minipage}\hfill
   \begin{minipage}{0.48\textwidth}
     \centering
     \includegraphics[width=.7\linewidth]{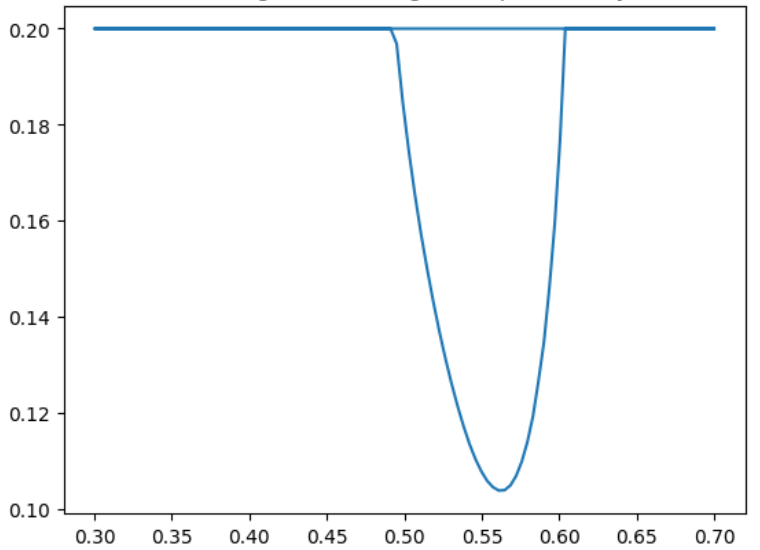}
     \caption{$ |\bsigma_{\tau}| $ along $\Gamma_{S,1}$, Example \ref{sec:ex}.2}
     \label{sigma_tau1_E2}
   \end{minipage}
\end{figure}

\begin{figure}[h!]
   \begin{minipage}{0.48\textwidth}
     \centering
     \includegraphics[width=.7\linewidth]{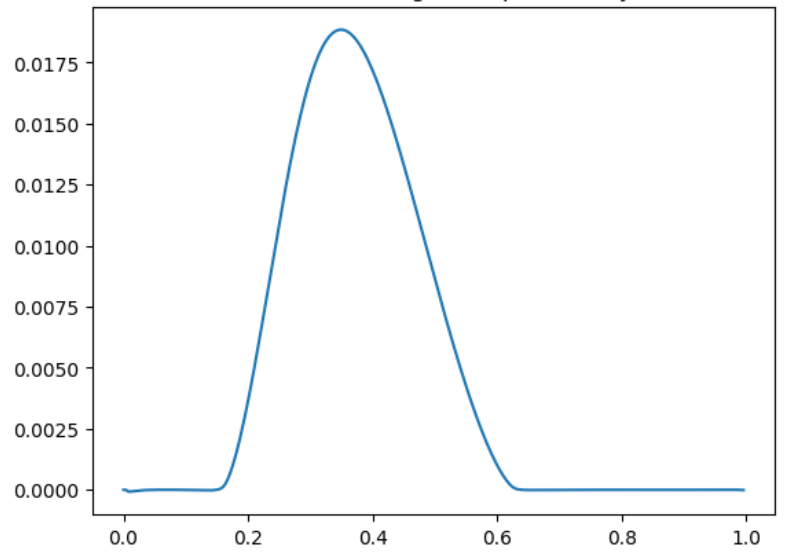}
     \caption{$ |\bu_{\tau}| $ along the slip boundary$\Gamma_{S,2}$}
     \label{u_tau2_E2}
   \end{minipage}\hfill
   \begin{minipage}{0.48\textwidth}
     \centering
     \includegraphics[width=.7\linewidth]{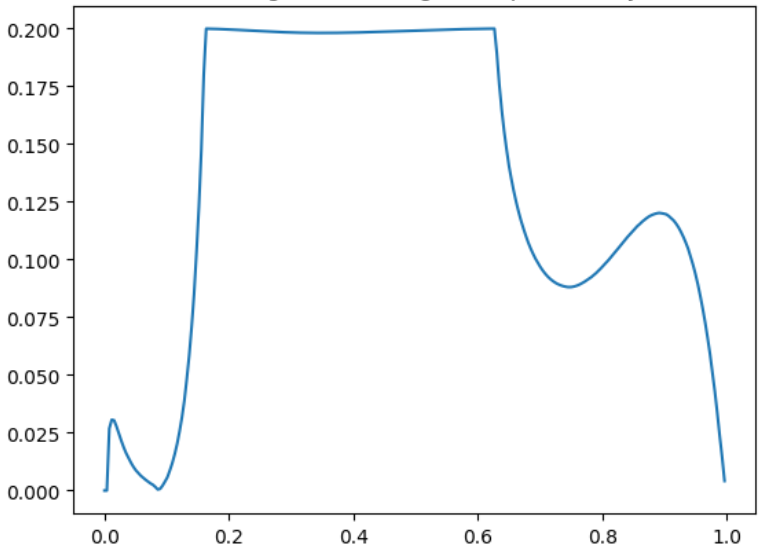}
     \caption{$ |\bsigma_{\tau}| $ along $\Gamma_{S,2}$, Example \ref{sec:ex}.2}
     \label{sigma_tau2_E2}
   \end{minipage}
\end{figure}

\end{document}